\newtheorem{Thm}{Theorem}[section]
\newtheorem{Cor}[Thm]{Corollary}
\newtheorem{Prop}[Thm]{Proposition}
\theoremstyle{definition}
\newtheorem{Def}[Thm]{Definition}
\theoremstyle{remark}
\def\eps{\varepsilon}
\def\Mdb{\mathbb M}
\def\Ndb{\mathbb N}
\def\Rdb{\mathbb R}
\def \diam{\text{diam }}
\def\Lip{\text{Lip}}
\def\n{\overline n}
\def\m{\overline m}
\newcommand{\bib}{\bibitem}
\begin{document}

\title[Asymptotic and coarse Lipschitz structures of Banach spaces]{Asymptotic and coarse Lipschitz structures of quasi-reflexive Banach spaces}


\author{G. Lancien$^\clubsuit$}
\address{$\clubsuit$ Laboratoire de Math\'ematiques de Besan\c con, Universit\'e Bourgogne  Franche-Comt\'e, 16 route de Gray, 25030 Besan\c con C\'edex, France}
\email{gilles.lancien@univ-fcomte.fr}

\author{M. Raja$^\spadesuit$}
\address{$^\spadesuit$ Departamento de Matem\'{a}ticas, Universidad de Murcia, Campus de Espinardo, 30100 Espinardo, Murcia, Spain}
\email{matias@um.es}

\subjclass[2010]{46B80, 46B20}

\thanks{The first named author was supported by the French
``Investissements d'Avenir'' program, project ISITE-BFC (contract
 ANR-15-IDEX-03) and as a participant of the ``NSF Workshop in Analysis and Probability'' at Texas A\&M University.}
\thanks{The second named author was partially supported by the grants MINECO/FEDER MTM2014-57838-C2-1-P and Fundaci\'on S\'eneca CARM 19368/PI/14.}

\keywords{coarse Lipschitz embeddings, quasi-reflexive Banach spaces, asymptotic structure of Banach spaces}

\maketitle

\begin{abstract}  In this note, we extend to the setting of quasi-reflexive spaces a classical result of N. Kalton and L. Randrianarivony on the coarse Lipschitz structure of reflexive and asymptotically uniformly smooth Banach spaces. As an application, we show for instance, that for $1\le q<p$, a $q$-asymptotically uniformly convex Banach space does not coarse Lipschitz embed into a $p$-asymptotically uniformly smooth quasi-reflexive Banach space. This extends a recent result of B.M. Braga.
\end{abstract}

\section{Introduction.}
We start this note with some basic definitions on metric embeddings.\\
Let $(M,d)$ and $(N,\delta)$ be two metric spaces and $f$ be a map from $M$ into $N$. We define the {\it compression modulus} of $f$ by
$$\rho_f(t)=\inf\big\{\delta(f(x),f(y)),\  d(x,y)\geq t\big\},$$
and the {\it expansion modulus} of $f$ by
$$\omega_f(t)={\rm sup}\big\{\delta(f(x),f(y)),\  d(x,y)\leq t\big\}.$$
We say that $f$ is a {\it Lipschitz embedding} if there exist  $A,B$ in $(0,\infty)$ such that $\omega_f(t)\le Bt$ and $\rho_f(t)\ge At$.\\
If the metric space $M$ is unbounded, we say that $f$ is a {\it coarse embedding} if $\lim_{t\to\infty}\rho_f(t)=\infty$ and $\omega_f(t)<\infty$ for all $t>0$. Note that if $M$ is a Banach space, we have automatically in that case that $\omega_f$ is dominated by an affine function.\\
We say that $f$ is a {\it coarse Lipschitz embedding} if there exist  $A,B,C,D$ in $(0,+\infty)$ such that $\omega_f(t)\le Bt+D$ and $\rho_f(t)\ge At-C$.\\
In order to refine the scale of coarse embeddings, E. Guentner and J. Kaminker introduced in \cite{GuKa} the following notion. Let $X$ and $Y$ be two Banach spaces. We define $\alpha_Y(X)$ as the supremum of all $\alpha \in [0,1)$ for which there exists a coarse embedding $f:X\to Y$ and $A,C$ in $(0,\infty)$ so that $\rho_f(t)\ge At^\alpha-C$ for all $t>0$. Then, $\alpha_Y(X)$ is called the {\it compression exponent of $X$ in $Y$}.

\medskip We now turn to the definitions of the uniform asymptotic properties of norms that will be considered in this paper. For a Banach space $(X,\|\ \|)$ we
denote by $B_X$ the closed unit ball of $X$ and by $S_X$ its unit
sphere. The following definitions are due to V. Milman \cite{Milman} and we follow the notation from \cite{JohnsonLindenstraussPreissSchechtman2002}. For $t\in [0,\infty)$, $x\in S_X$ and $Y$ a closed linear subspace of $X$, we define
$$\overline{\rho}_X(t,x,Y)=\sup_{y\in S_Y}\big(\|x+t y\|-1\big)\ \ \ \ {\rm and}\ \ \
\ \overline{\delta}_X(t,x,Y)=\inf_{y\in S_Y}\big(\|x+t y\|-1\big).$$ Then
$$\overline{\rho}_X(t,x)=\inf_{{\rm
dim}(X/Y)<\infty}\overline{\rho}_X(t,x,Y)\ \ \ \ {\rm and}\ \ \
\ \overline{\delta}_X(t,x)=\sup_{{\rm
dim}(X/Y)<\infty}\overline{\delta}_X(t,x,Y)$$
and
$$\overline{\rho}_X(t)=\sup_{x\in S_X}\overline{\rho}_X(t,x)\ \ \ \ {\rm and}\ \ \ \
\overline{\delta}_X(t)=\inf_{x\in S_X}\overline{\delta}_X(t,x).$$
The norm $\|\ \|$ is said to be
{\it asymptotically uniformly smooth} (in short AUS) if
$$\lim_{t \to 0}\frac{\overline{\rho}_X(t)}{t}=0.$$
It is said to be {\it asymptotically uniformly convex} (in short
AUC) if $$\forall t>0 \ \ \ \ \overline{\delta}_X(t)>0.$$
Let $p\in (1,\infty)$ and $q\in [1,\infty)$.\\
We say that the norm of $X$ is {\it $p$-AUS} if there exists $c>0$ such that for all $t\in [0,\infty)$, $\overline{\rho}_X(t)\le ct^p$.\\
We say that the norm of $X$ is {\it $q$-AUC} if there exits $c>0$ such that for all $t \in [0,1]$, $\overline{\delta}_X(t)\ge ct^q$.\\
Similarly, there is in $X^*$ a modulus of weak$^*$
asymptotic uniform convexity defined by
$$ \overline{\delta}_X^*(t)=\inf_{x^*\in S_{X^*}}\sup_{E}\inf_{y^*\in S_E}\big(\|x^*+ty^*\|-1\big),$$
where $E$ runs through all weak$^*$-closed subspaces of
$X^*$ of finite codimension. The norm of $X^*$ is said to be {\it weak$^*$ uniformly asymptotically convex} (in short weak$^*$-AUC) if $\overline{\delta}_X^*(t)>0$ for all $t$ in $(0,\infty)$. If there exists $c>0$ and $q\in [1,\infty)$ such that for all $t\in [0,1]$ $\overline{\delta}_X^*(t)\ge ct^q$, we say that the norm of $X^*$ is $q$-weak$^*$-AUC.

\medskip Let us recall the following classical duality result concerning these moduli (see for instance \cite{DKLR} Corollary 2.3 for a precise statement).

\begin{Prop}\label{duality} Let $X$ be a Banach space.\\
Then $\|\ \|_X$ is AUS if and and only if $\|\ \|_{X^*}$ is weak$^*$-AUC.\\
If $p,q\in (1,\infty)$ are conjugate exponents, then $\|\ \|_X$ is $p$-AUS if and and only if $\|\ \|_{X^*}$ is $q$-weak$^*$-AUC.
\end{Prop}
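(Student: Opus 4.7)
My plan is to prove the duality via a Fenchel--Young pairing inequality between $\overline{\rho}_X(\cdot,x)$ and $\overline{\delta}_X^*(\cdot,x^*)$ at norming pairs, adapting the classical Lindenstrauss duality between uniform moduli to the asymptotic setting.

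\medskip

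\textbf{Step 1 (Sequential reformulation).} I would first rewrite the two moduli in terms of null sequences:
\[
\overline{\rho}_X(t,x)=\sup\bigl\{\limsup_n(\|x+ty_n\|-1):(y_n)\subset S_X,\ y_n\xrightarrow{w}0\bigr\},
\]
\[
\overline{\delta}_X^*(s,x^*)=\inf\bigl\{\liminf_n(\|x^*+sy_n^*\|-1):(y_n^*)\subset S_{X^*},\ y_n^*\xrightarrow{w^*}0\bigr\}.
\]
These identities rest on the observation that a weakly (resp.\ weak*-) null sequence is eventually almost inside any finite-codim (resp.\ weak*-closed finite-codim) subspace, and that conversely any such subspace supports null sequences on its sphere realizing the sup/inf.

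\medskip

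\textbf{Step 2 (Fenchel--Young pairing inequality).} For every norming pair $(x,x^*)\in S_X\times S_{X^*}$ with $x^*(x)=1$ and all $s,t>0$, I aim to prove
\[
(1+\overline{\rho}_X(t,x))(1+\overline{\delta}_X^*(s,x^*))\geq 1+st.
\]
Take $(y_n^*)\subset S_{X^*}$ weak*-null nearly minimizing $\overline{\delta}_X^*(s,x^*)$. By a diagonal argument using Step~1, one constructs a weakly null $(y_n)\subset S_X$ satisfying $\|x+ty_n\|\to 1+\overline{\rho}_X(t,x)$ and $y_n^*(y_n)\to 1$; this last coupling is arranged by choosing $y_n$ inside a finite-codim subspace $Y_n$ on which $y_n^*$ has restricted norm close to $1$ (a weak*-null unit sequence in $X^*$ has restrictions to finite-codim subspaces of norm tending to $1$ after a subsequence extraction, since the finite-dimensional ``orthogonal part'' must vanish). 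Expanding
\[
\|x+ty_n\|\cdot\|x^*+sy_n^*\|\geq(x^*+sy_n^*)(x+ty_n)=x^*(x)+tx^*(y_n)+sy_n^*(x)+st\,y_n^*(y_n),
\]
and passing to the limit yields $1+st$ on the right (using $x^*(x)=1$, $x^*(y_n)\to 0$ since $(y_n)$ is weakly null, $y_n^*(x)\to 0$ since $(y_n^*)$ is weak*-null, and $y_n^*(y_n)\to 1$ by construction).

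\medskip

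\textbf{Step 3 (Conclusion via Legendre transform).} The pairing inequality gives
\[
\overline{\delta}_X^*(s,x^*)\geq\frac{st-\overline{\rho}_X(t,x)}{1+\overline{\rho}_X(t,x)}\quad\text{for all }t>0.
\]
If $\overline{\rho}_X(t)/t\to 0$ (AUS), then for any $s>0$ and $t$ small enough the right side is positive, hence $\overline{\delta}_X^*(s)>0$ after taking the infimum over $x^*$ (using that every $x^*$ admits an approximate support vector by Bishop--Phelps, or a direct limiting argument). For the quantitative $p$-$q$ version, $\overline{\rho}_X(t)\leq ct^p$ combined with optimization over $t$ (at $t\sim s^{1/(p-1)}$) yields $\overline{\delta}_X^*(s)\geq c's^q$ with $q$ conjugate to $p$. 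The converse directions follow by a symmetric argument, or by applying the forward direction inside $X^{**}$, using that the weak topology on $X$ is the restriction of the weak* topology on $X^{**}$.

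\medskip

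\textbf{Main obstacle.} The diagonal extraction in Step~2 ensuring $y_n^*(y_n)\to 1$ while keeping $(y_n)$ weakly null and $(y_n^*)$ weak*-null is the most delicate part: it requires simultaneous control of the restrictions $\|y_n^*|_{Y_n}\|\to 1$ on chosen finite-codim subspaces and a careful interlacing of the subsequence choices. Once this coupling is achieved, the remaining steps reduce to the elementary Legendre--Fenchel computation.
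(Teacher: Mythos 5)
The paper does not actually prove Proposition~\ref{duality}: it cites \cite{DKLR}, Corollary 2.3, so there is no in-paper argument to compare with. Your overall strategy --- a Young-type pairing inequality between $\overline{\rho}_X(t,x)$ and $\overline{\delta}_X^*(s,x^*)$ at norming pairs, followed by a Legendre-type optimization --- is indeed the standard route to this duality. The problem is that the lemma you invoke to close Step~2 is false, and with it the key coupling collapses.

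The false claim is that a weak$^*$-null unit sequence in $X^*$ has restrictions to a given finite-codimensional subspace of norm tending to $1$ after passing to a subsequence, ``since the finite-dimensional orthogonal part must vanish.'' Take $X=\ell_2$, $y_n^*=\tfrac{1}{\sqrt2}(e_1+e_n)$ and $Y=\{x:\ x_1=0\}$: the sequence is normalized and weak$^*$-null, yet $\|y_n^*|_Y\|=\tfrac{1}{\sqrt2}$ for every $n$, and no subsequence helps. Weak$^*$-nullity forces $y_n^*(u)\to0$ for each \emph{fixed} $u$, but it does not force $d(y_n^*,Y^\perp)\to \|y_n^*\|$. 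This matters because your pairing inequality needs $y_n^*(y_n)\to1$ with $y_n\in S_Y$ for a near-optimal finite-codimensional $Y$ for $\overline{\rho}_X(t,x)$, i.e.\ it needs exactly $\|y_n^*|_Y\|\to1$. In the example above, with $x=x^*=e_0$, the sequence $\tfrac{1}{\sqrt2}(e_1+e_n)$ is a genuine minimizing sequence for $\overline{\delta}^*_X(s,x^*)$ while $Y=e_0^\perp\cap e_1^\perp$ is a legitimate optimal subspace for $\overline{\rho}_X(t,x)$; your argument then only yields $y_n^*(y_n)\le\tfrac1{\sqrt2}$, hence $(1+\overline{\rho})(1+\overline{\delta}^*)\ge 1+\tfrac{st}{\sqrt2}$ rather than $1+st$. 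Since $\overline{\delta}_X^*(s,x^*)$ is a $\sup$ over $E$ of an $\inf$ over $S_E$, a lower bound requires you to \emph{construct} a specific weak$^*$-closed finite-codimensional $E$ adapted to $Y$ and to control \emph{all} $y^*\in S_E$, including those leaning into the finite-dimensional space $Y^\perp$; starting from an arbitrary near-minimizing weak$^*$-null sequence and hoping to couple it with $S_Y$ cannot work. This is precisely where the known proofs spend their effort, and as written your argument does not close it. A secondary, reparable issue: the sequential formula for $\overline{\rho}_X(t,x)$ in Step~1 fails as an equality in spaces with no normalized weakly null sequences ($\ell_1$, by the Schur property, has $\overline{\rho}(t)=t$ while your supremum runs over an empty set); only ``$\ge$'' holds in general. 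Finally, the converse implications are only asserted (``by a symmetric argument''), although the weak and weak$^*$ topologies do not play symmetric roles here.
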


\medskip The main purpose of this note is to extend to the quasi-reflexive case an important  result obtained by N. Kalton and L. Randrianarivony in \cite{KR} about coarse Lipschitz embeddings into $p$-AUS reflexive spaces. In order to explain their result, we need to introduce special metric graphs that we shall call  {\it Kalton-Randrianarivony's graphs}. For an infinite subset $\Mdb$ of $\Ndb$ and $k\in \Ndb$, we denote
$$G_k(\Mdb)=\{\n=(n_1,..,n_k),\ n_i\in\Mdb\ \ n_1<..<n_k\}.$$
Then we equip $G_k(\Mdb)$ with the distance $d(\n,\m)=|\{j,\ n_j\neq m_j\}|$. The fundamental result of their paper (Theorem 4.2 in \cite{KR}) can be rephrased as follows.

\begin{Thm}\label{KaRa} {\bf (Kalton-Randriarivony 2008)} Let $p\in (1,\infty)$ and assume that $Y$ is a reflexive $p$-AUS Banach space. Then there exists a constant $C>0$ such that for any infinite subset $\Mdb$ of $\Ndb$, any $f: (G_k(\Mdb),d) \to Y$ Lipschitz map and any $\eps>0$, there exists an infinite subset $\Mdb'$ of $\Mdb$, such that
$$\diam f(G_k(\Mdb')) \le CLip(f) k^{1/p}+\eps.$$
\end{Thm}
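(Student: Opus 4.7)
The strategy is to follow Kalton and Randrianarivony's original scheme: combine a Ramsey-type stabilisation that makes certain difference sequences weakly null with the asymptotic $\ell_p$-upper estimate that the $p$-AUS property of $Y$ delivers for such sequences via Proposition~\ref{duality}.

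First, using the reflexivity of $Y$, I would thin $\mathbb{M}$ to an infinite subset $\mathbb{M}'$ on which the relevant iterated weak limits exist. More precisely, for every $1 \le j \le k$ and every choice of later coordinates $(n_{j+1},\ldots,n_k)$ drawn from $\mathbb{M}'$, I want
$$w\text{-}\lim_{n_j\in \mathbb{M}'} \cdots w\text{-}\lim_{n_1\in \mathbb{M}'} f(n_1,\ldots,n_k)$$
to exist in $Y$. Such an $\mathbb{M}'$ is obtained by an iterated Ramsey / diagonal argument, the existence of the weak limits being guaranteed by the weak sequential compactness of bounded subsets of the reflexive space $Y$.

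Next, I would fix arbitrary $\n, \m \in G_k(\mathbb{M}')$ and introduce an auxiliary tuple $\bar{l} = (l_1,\ldots,l_k) \in G_k(\mathbb{M}')$ with $l_1$ strictly larger than every coordinate of $\n$ and $\m$. Telescoping gives
$$f(\bar{l}) - f(\n) \;=\; \sum_{i=1}^{k}\big(f(\bar{u}_i) - f(\bar{u}_{i-1})\big),$$
with $\bar{u}_0 = \n$, $\bar{u}_k = \bar{l}$, and each $\bar{u}_i$ obtained from $\bar{u}_{i-1}$ by replacing a single $n$-coordinate by the corresponding $l$-coordinate. Each summand has norm at most $\text{Lip}(f)$, and by pushing the $l$-coordinates to infinity along $\mathbb{M}'$ one at a time, each such increment becomes weakly close to an element of a weakly null sequence (this is where the stabilisation from Step~1 is used). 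Now, Proposition~\ref{duality} yields the primal consequence of $p$-AUS: for any $y \in Y$ and any weakly null sequence $(y_n) \subset Y$,
$$\limsup_n \|y + y_n\|^p \;\le\; \|y\|^p + C\,\limsup_n \|y_n\|^p.$$
Applying this inequality $k$ times inductively along the telescoped sum (each time with the previous partial sum playing the role of $y$), and absorbing the $k$ small approximation errors into $\epsilon/2$ by choosing $\bar{l}$ sufficiently late in $\mathbb{M}'$, produces the bound $\|f(\bar{l}) - f(\n)\| \le C\,\text{Lip}(f)\,k^{1/p} + \epsilon/2$. The identical estimate for $\m$, combined via the triangle inequality, gives the required control on $\text{diam}\,f(G_k(\mathbb{M}'))$.

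The main technical obstacle is the simultaneous coordination of the $k$ approximation errors: each invocation of the $p$-AUS inequality is only valid up to a preassigned tolerance, and these tolerances must be managed uniformly over all $\n, \m \in G_k(\mathbb{M}')$. This is handled by a diagonal construction of $\mathbb{M}'$ in which the extraction at level $j$ is made sharp enough to accommodate an error of size $\epsilon/2^{j+1}$. Reflexivity is essential at this stage: it ensures that the weak limits in Step~1 live in $Y$ itself (and not merely in $Y^{**}$), so that the primal $p$-AUS estimate can be applied directly to genuinely weakly null sequences --- and this is precisely the point that will require modification in the quasi-reflexive setting that is the main concern of the paper.
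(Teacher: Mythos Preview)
The paper does not itself prove Theorem~\ref{KaRa}; it is quoted from \cite{KR}. The relevant in-house comparison is with the proof of the more general Theorem~\ref{QRAUS+}, which specialises to Theorem~\ref{KaRa} when $Y$ is reflexive (so that $E=\{0\}$). That proof proceeds by induction on $k$: for each $\n\in G_{k}(\Mdb)$ one takes the weak(-$*$) limit $g(\n)=w\text{-}\lim_{n_{k+1}}f(\n,n_{k+1})$ along the \emph{last} coordinate, applies the induction hypothesis to $g$, and then uses the AUS estimate (via Proposition~\ref{waus}, not Proposition~\ref{duality}) to recombine.

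Your overall philosophy --- stabilise weak limits, then feed the resulting weakly null increments into the $p$-AUS upper-$\ell_p$ estimate --- is the correct Kalton--Randrianarivony strategy, but the concrete scheme you wrote down has a genuine obstruction. In Step~1 you fix the \emph{later} coordinates $(n_{j+1},\ldots,n_k)$ and attempt to take iterated weak limits in $n_1,\ldots,n_j$. This is impossible: the membership constraint $n_1<\cdots<n_j<n_{j+1}$ confines the first $j$ coordinates to a finite set once $n_{j+1}$ is fixed, so there is no limit along an infinite $\Mdb'$ to speak of. The stabilisation must go the other way: fix the \emph{earlier} coordinates and send the last one to infinity.

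The same issue contaminates your telescoping. If $\bar u_i$ is obtained from $\bar u_{i-1}$ by replacing $n_i$ with $l_i$ (front to back), then already $\bar u_1=(l_1,n_2,\ldots,n_k)$ fails to lie in $G_k(\Mdb')$, since you chose $l_1>n_k>n_2$. The only order of replacement that keeps every intermediate $\bar u_i$ strictly increasing is back to front --- replace $n_k$ by $l_k$ first, then $n_{k-1}$ by $l_{k-1}$, and so on --- and with that ordering the successive increments are precisely the ones controlled by the last-coordinate weak limits used in the paper's inductive argument. Once you reverse the direction of both the iterated limits and the telescoping, your sketch collapses onto the Kalton--Randrianarivony proof (equivalently, the reflexive case of the proof of Theorem~\ref{QRAUS+}).
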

We refer the reader to \cite{KR} and \cite{K2} for the various applications derived by the authors. Very recently, B.M. Braga used the above theorem in \cite{Br} to develop many other applications. We will only mention one of them in this introduction (Corollary 4.5 in \cite{Br}).

\begin{Thm}\label{Braga} {\bf (Braga 2016)} Let $1\le q<p$, $X$ be a $q$-AUC Banach space and $Y$ be a $p$-AUS reflexive Banach space. Then $\alpha_Y(X)\le q/p$.
\end{Thm}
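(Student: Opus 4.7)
The strategy is to combine the $q$-AUC structure of $X$ with Theorem \ref{KaRa} applied to a suitable Lipschitz factorization through Kalton--Randrianarivony's graphs, and argue by contradiction. Suppose $\alpha_Y(X) > q/p$ and fix $\alpha \in (q/p, \alpha_Y(X))$; then there exist a coarse embedding $f : X \to Y$ and constants $A, B, C, D > 0$ with
\[\rho_f(t) \ge A t^\alpha - C \qquad \text{and} \qquad \omega_f(t) \le B t + D \qquad \text{for all } t > 0,\]
the second inequality being valid because $X$ is a Banach space.

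Since $X$ is $q$-AUC it is reflexive, so assuming (as we may) that $\dim X = \infty$, we pick a normalized weakly null sequence $(x_n)$ in $X$. A standard iterative consequence of the hypothesis $\overline{\delta}_X(t) \ge c t^q$ is the following Prus-type estimate: after extracting a suitable subsequence, still denoted $(x_n)$, there is $c_0 > 0$ such that for every $k \in \Ndb$, every increasing choice of indices $n_1 < \cdots < n_k$ from this subsequence, and every choice of signs $(\eps_j)$,
\[\Big\| \sum_{j=1}^{k} \eps_j \, x_{n_j} \Big\| \ge c_0 \, k^{1/q}.\]
Define $\varphi_k : G_k(\Ndb) \to X$ by $\varphi_k(\n) = x_{n_1} + \cdots + x_{n_k}$. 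Changing one coordinate of $\n$ alters $\varphi_k(\n)$ by at most $2$ in norm, so $\varphi_k$ is $2$-Lipschitz; consequently $f \circ \varphi_k$ is Lipschitz from $(G_k(\Ndb),d)$ into $Y$ with constant at most $\omega_f(2) \le 2B + D$.

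Applying Theorem \ref{KaRa} to $f \circ \varphi_k$ with $\eps = 1$ yields a constant $C' > 0$ depending only on $Y$, and an infinite subset $\Mdb' \subset \Ndb$, such that
\[\diam (f \circ \varphi_k)(G_k(\Mdb')) \le C'(2B+D)\, k^{1/p} + 1.\]
On the other hand, choose $\n, \m \in G_k(\Mdb')$ with interlacing entries $n_1 < m_1 < n_2 < m_2 < \cdots < n_k < m_k$. The difference
\[\varphi_k(\n) - \varphi_k(\m) = \sum_{i=1}^{k} \bigl(x_{n_i} - x_{m_i}\bigr)\]
is, after reindexing, a signed sum of $2k$ consecutive terms of the chosen subsequence, hence of norm at least $c_0 (2k)^{1/q}$. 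Combining the two estimates,
\[A c_0^\alpha (2k)^{\alpha/q} - C \le \rho_f\bigl(c_0(2k)^{1/q}\bigr) \le C'(2B+D)\, k^{1/p} + 1.\]
Since $\alpha > q/p$ implies $\alpha/q > 1/p$, the left-hand side grows strictly faster than the right-hand side as $k \to \infty$, producing the desired contradiction.

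The one nontrivial point in this plan is the extraction of the $\ell_q$-lower estimate $\|\sum \eps_j x_{n_j}\| \ge c_0 k^{1/q}$ from the asymptotic hypothesis $\overline{\delta}_X(t) \ge ct^q$. This is done inductively by setting $s_k = \|\sum_{j=1}^k x_{n_j}\|$, applying the definition of $\overline{\delta}_X$ at the unit vector $S_k/s_k$, and thinning the sequence so that each new term $x_{n_{k+1}}$ lies, up to a summable error, in the relevant finite-codimensional subspace; the Bernoulli inequality then yields the recursion $s_{k+1}^q \ge s_k^q + qc$, which integrates to the claimed bound. Once this step is in place, the rest is routine scale bookkeeping and a comparison of the exponents $\alpha/q$ and $1/p$.
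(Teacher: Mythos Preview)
Your argument has one genuine gap: the claim that a $q$-AUC Banach space is reflexive is false. The space $\ell_1$ satisfies $\overline{\delta}_{\ell_1}(t)=t$ on $[0,1]$ (for $x\in S_{\ell_1}$ with essentially finite support, take $Y$ to be the tail subspace disjoint from that support), so $\ell_1$ is $q$-AUC for every $q\ge 1$, yet it is not reflexive and, being a Schur space, has no normalized weakly null sequence at all. Thus the very first step --- producing a weakly null sequence from reflexivity --- collapses exactly in the case the statement must cover.

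The paper (following Braga, and mirrored in the proof of Theorem~\ref{qcoBS}) handles this by a dichotomy via Rosenthal's $\ell_1$ theorem: if $X$ contains $\ell_1$, one maps $G_k(\Ndb)$ directly onto a sequence equivalent to the $\ell_1$-basis, obtaining a lower bound of order $k$ and hence $\alpha_Y(X)\le 1/p\le q/p$; if $X$ does not contain $\ell_1$, Rosenthal's theorem yields a normalized weakly null sequence, and then an argument of your type goes through. So your proof is really only addressing the second branch of this dichotomy.

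Within that second branch your route is close to the paper's but organized differently. You extract a single subsequence satisfying a uniform signed lower-$\ell_q$ estimate \emph{before} applying Theorem~\ref{KaRa}; the paper instead uses only the (positive-sign) $q$-co-Banach-Saks property (Proposition~\ref{coBSAUC}), applies it \emph{after} Theorem~\ref{KaRa} to the differences $z_n=x_{r_{2n}}-x_{r_{2n+1}}$, and ties the two extractions together by a diagonalization over $k$. Your Prus-type extraction with arbitrary signs is legitimate (at step $m$ the next term need only be close to finitely many finite-codimensional subspaces determined by the previously chosen vectors, which weak nullity guarantees), and it has the virtue of avoiding the diagonalization. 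But you should be explicit that the constant $c_0$ is uniform in $k$ and that $s_k\ge 1$ at every stage so the recursion applies; the sketch you give is on the terse side.

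In summary: insert the Rosenthal dichotomy and treat the $\ell_1$ case separately; once that is done, the remainder of your plan is a correct and slightly more direct variant of the paper's argument.
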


We recall that a Banach space is said to be {\it quasi-reflexive} if the image of its canonical embedding into its bidual is of finite codimension in its bidual. The aim of this note is to obtain a version of the above Theorem \ref{KaRa} for quasi-reflexive Banach spaces. This is done in section 2 and our main results are Theorem \ref{QRAUS} and Theorem \ref{QRAUS+}. In section 3, we apply them in order to extend some results from \cite{Br} to the quasi-reflexive setting, including the above Theorem \ref{Braga}. Our applications are stated in Corollary \ref{ABS}, Theorem \ref{qcoBS}, Corollary \ref{AUC-AUS} and Corollary \ref{Sz}.

\section{The main result.}

We first need the following simple property of the bidual of a Banach space $X$, in relation with the modulus of uniform asymptotic smoothness of $X$.
\begin{Prop}\label{waus}
Let $X$ be a Banach space. Then the bidual norm on $X^{**}$ has the following property. For any $t\in (0,1)$, any weak$^*$-null sequence $(x^{**}_n)_{n=1}^\infty$ in $B_{X^{**}}$ and any  $x \in S_X$ we have:
$$ \limsup_{n \rightarrow \infty} \| x+tx^{**}_n \| \leq 1 + \overline{\rho}_{X}(t,x). $$
\end{Prop}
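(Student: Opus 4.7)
The plan is to fix $\eps>0$, reduce the statement to a norm estimate on the weak$^*$-closed finite codimensional subspace $Y^{\perp\perp}\subset X^{**}$ associated to a well chosen $Y\subset X$, and then use the fact that weak$^*$-null sequences eventually lie close to such $Y^{\perp\perp}$.

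First, by the very definition of $\overline{\rho}_X(t,x)$ I pick a closed finite codimensional subspace $Y\subset X$ with $\sup_{y\in S_Y}(\|x+ty\|-1)\le\overline{\rho}_X(t,x)+\eps$. A short convexity argument (writing $x+ty=\|y\|\bigl(x+ty/\|y\|\bigr)+(1-\|y\|)x$ for $y\in B_Y\setminus\{0\}$) upgrades this to $\|x+ty\|\le 1+\overline{\rho}_X(t,x)+\eps$ for every $y\in B_Y$.

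Next, I want the same inequality for every $y^{**}\in B_{Y^{\perp\perp}}$. Under the canonical isometric identification $Y^{**}\equiv Y^{\perp\perp}\subset X^{**}$ and the fact that $Y^*\cong X^*/Y^\perp$, the $\sigma(Y^{**},Y^*)$ topology on $Y^{\perp\perp}$ coincides with the restriction of $\sigma(X^{**},X^*)$. By Goldstine's theorem every $y^{**}\in B_{Y^{\perp\perp}}$ is then a limit in $\sigma(X^{**},X^*)$ of a net in $B_Y$, and weak$^*$-lower semicontinuity of the bidual norm gives
\[
\|x+ty^{**}\|\le 1+\overline{\rho}_X(t,x)+\eps\qquad \text{for all }y^{**}\in B_{Y^{\perp\perp}}.
\]

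To conclude, I exploit the weak$^*$-nullness. Since $Y$ has finite codimension, $Y^\perp\subset X^*$ is finite dimensional, and the quotient $X^{**}/Y^{\perp\perp}\cong(Y^\perp)^*$ is finite dimensional, so the restriction map $x^{**}\mapsto x^{**}|_{Y^\perp}$ is weak$^*$-to-norm continuous; hence $x_n^{**}\stackrel{w^*}{\to}0$ forces $d_n:=\mathrm{dist}(x_n^{**},Y^{\perp\perp})\to 0$. Pick $y_n^{**}\in Y^{\perp\perp}$ with $\|x_n^{**}-y_n^{**}\|\le 2d_n$; then $\|y_n^{**}\|\le 1+2d_n$, and rescaling $y_n^{**}/(1+2d_n)\in B_{Y^{\perp\perp}}$ and using the previous step together with the triangle inequality yields
\[
\|x+tx_n^{**}\|\le \|x+ty_n^{**}\|+t\|x_n^{**}-y_n^{**}\|\le 1+\overline{\rho}_X(t,x)+\eps+O(d_n).
\]
Taking $\limsup_n$ and then $\eps\to 0$ finishes the proof.

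The one delicate point, and the main obstacle I would pay attention to, is the lifting in the middle step: the Goldstine net approximating $y^{**}\in Y^{\perp\perp}$ by elements of $B_Y$ must converge in the ambient $\sigma(X^{**},X^*)$ topology (not merely in $\sigma(Y^{**},Y^*)$), so that the weak$^*$-lower semicontinuity of the bidual norm of $X^{**}$ applies. Once the identification $Y^*\cong X^*/Y^\perp$ is unpacked this is routine, but it is where the bidual enters in an essential way.
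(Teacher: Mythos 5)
Your proposal is correct and follows essentially the same route as the paper: choose a finite codimensional $Y$ realizing $\overline{\rho}_X(t,x)$ up to $\eps$, pass to $B_{Y^{\perp\perp}}$ via Goldstine's theorem and weak$^*$-lower semicontinuity of the bidual norm, and use the finite dimensionality of $Y^\perp$ to see that a weak$^*$-null sequence is asymptotically close to $Y^{\perp\perp}$ (the paper phrases this as $d(x_n^{**},B_{\overline{Y}^{w^*}})\to 0$ after taking weak$^*$-closures of the inclusion $x+tB_Y\subset(1+\overline{\rho}_X(t,x)+\eps)B_X$). Your extra details (the convexity upgrade from $S_Y$ to $B_Y$ and the explicit rescaling of the approximants) are fine and only make explicit what the paper leaves implicit.
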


\noindent
\begin{proof} Let $x$ in $S_X$ and $\eps>0$. By definition, there exists a finite codimensional subspace $Y$ of $X$ such that
\begin{equation}\label{applyaus}
x + t B_Y \subset (1+ \overline{\rho}_{X}(t,x) + \varepsilon) B_X.
\end{equation}
There exist $x_1^*,..,x_k^*\in X^*$ such that
$$Y=\bigcap_{i=1}^k \{x \in X,\ x_i^*(x)=0\}.$$ Note that the weak$^*$-closure of $Y$ in $X^{**}$ is
$$\overline{Y}^{w^*} = Y^{\perp \perp}=\bigcap_{i=1}^k \{x^{**} \in X^{**},\  x^{**}(x_i^*)=0\}.$$
By Goldstine's Theorem we have that the weak$^*$-closures in $X^{**}$ of $B_Y$ and $B_X$ are respectively $B_{\overline{Y}^{w^*}}$ and $B_{X^{**}}$. So, taking the weak$^*$-closures in (\ref{applyaus}) we get that
$$x+t B_{\overline{Y}^{w^*}} \subset (1+ \overline{\rho}_{X}(t,x) + \varepsilon) B_{X^{**}}.$$
Since the sequence $(x^{**}_n)_{n=1}^\infty$ is weak$^*$-null, we have that for all $i$ in $\{1,..,k\}$, $\lim_{n\to \infty}x^{**}_n(x_i^*)=0$. It follows easily that $\lim_{n\to \infty}d(x^{**}_n,B_{\overline{Y}^{w^*}})=0$. We deduce, that for $n$ large enough
$$x+tx^{**}_n\in (1+ \overline{\rho}_{X}(t) + 2\varepsilon) B_{X^{**}},$$
which concludes our proof.
\end{proof}


We shall now give an analogue of Theorem \ref{KaRa} when $Y$ is only assumed to be quasi-reflexive and $p$-AUS for some $p\in (1,\infty)$. The idea is to adapt techniques from a work by F. N\'{e}tillard \cite{Ne} on the coarse Lipschitz embeddings between James spaces. To this end, for $\Mdb$ an infinite subset of $\Ndb$, we denote $I_k(\Mdb)$ the set of strictly interlaced pairs in $G_k(\Mdb)$, namely :
$$I_k(\Mdb)=\big\{(\n,\m) \in G_k(\Mdb)\times G_k(\Mdb),\ n_1 < m_1 < n_2 < m_2 < ... < n_k < m_k\big\}.$$
Note that for $(\n,\m) \in I_k(\Mdb)$, $d(\n,\m)=k$. Our statement is then the following.

\begin{Thm}\label{QRAUS} Let $p\in (1,\infty)$ and $Y$ be a  quasi-reflexive $p$-AUS Banach space. Then there exists a constant $C>0$ such that for any infinite subset $\Mdb$ of $\Ndb$, any $f:(G_k(\Mdb),d) \to Y^{**}$ Lipschitz and any $\eps>0$ there exists an infinite subset $\Mdb'$ of $\Mdb$, such that
$$\forall (\n,\m) \in I_k(\Mdb')\ \ \ \|f(\n) - f(\m)\| \leq C Lip(f)k^{\frac{1}{p}} + \eps.$$
\end{Thm}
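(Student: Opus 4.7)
My plan is to carry out the Kalton-Randrianarivony extraction scheme of Theorem \ref{KaRa} inside the bidual $Y^{**}$, using the weak*-compactness of bounded subsets of $Y^{**}$ in place of reflexivity and invoking Proposition \ref{waus} in place of the classical $p$-AUS estimate. Quasi-reflexivity enters twice: it ensures that bounded sets in $Y^{**}$ are weak*-compact, and it provides a finite-dimensional complement $Y^{**} = Y \oplus F$ whose contribution to the estimates can be absorbed into the $\eps$-error. The interlacing hypothesis is what forces the ``deepest'' iterated weak*-limit of the auxiliary difference function to vanish, enabling the telescoping estimate below.

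\emph{Step 1: iterated weak*-limits.} By a standard diagonal Ramsey extraction applied to $f$, taking values in the weak*-compact ball of radius $k\,\Lip(f)$ in $Y^{**}$, I pass to an infinite $\Mdb' \subset \Mdb$ and produce functions $f_j : G_j(\Mdb') \to Y^{**}$ for $0 \leq j \leq k$ (with $f_k = f$) such that, for every $\n = (n_1,\dots,n_j) \in G_j(\Mdb')$,
$$f_j(\n) \;=\; w^*\text{-}\lim_{n \in \Mdb',\,n > n_j} f_{j+1}(\n,n).$$
By weak*-lower semicontinuity of the norm and the Lipschitz hypothesis on $f$, the increments $u_{j+1}(\n,n) := f_{j+1}(\n,n) - f_j(\n)$ are weak*-null in $n$ and satisfy $\|u_{j+1}\| \leq \Lip(f)$.

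\emph{Step 2: interlaced telescope.} For $(\n,\m) \in I_k(\Mdb')$ I form the merged tuple $\p := (n_1,m_1,\dots,n_k,m_k) \in G_{2k}(\Mdb')$ and apply the same extraction to the auxiliary $2\Lip(f)$-Lipschitz function
$$F(p_1,\dots,p_{2k}) \;:=\; f(p_1,p_3,\dots,p_{2k-1}) - f(p_2,p_4,\dots,p_{2k}).$$
Both halves of $F$ share the same deepest iterated weak*-limit of $f$, so the deepest weak*-limit of $F$ vanishes. After a further extraction $\Mdb'' \subset \Mdb'$, the value $F(\p) = f(\n) - f(\m)$ decomposes as a sum $\sum_{j=1}^{2k} U_j(p_1,\dots,p_j)$ of $2k$ weak*-null increments, each of norm at most $2\,\Lip(f)$.

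\emph{Step 3 and main obstacle.} It remains to bound $\|\sum_{j=1}^{2k} U_j\|$ in $Y^{**}$. Proposition \ref{waus} together with the $p$-AUS bound $\overline{\rho}_Y(t) \leq c\,t^p$ gives an asymptotic upper estimate of $\limsup_n \|y + u_n^{**}\|$ by $(\|y\|^p + c'\,\|u_n^{**}\|^p)^{1/p}$ whenever $y \in Y$ and $(u_n^{**})$ is weak*-null in $Y^{**}$. Iterating this inward $2k$ times along the telescope would yield $\|\sum U_j\|^p \leq C\,\Lip(f)^p\,k$ and hence the desired bound $C'\,\Lip(f)\,k^{1/p} + \eps$. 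The central technical difficulty is that Proposition \ref{waus} applies only when the base vector $y$ lies in $Y$, whereas the partial sums arising in the iteration live naturally in $Y^{**}$. This is precisely where quasi-reflexivity is decisive: the partial sums can be split along the decomposition $Y^{**} = Y \oplus F$ with $\dim F < \infty$, so the $F$-component is a bounded finite-dimensional perturbation which, after an additional norm-compactness extraction in $F$, is absorbed into the $\eps$-error, while the $Y$-component is controlled by the $p$-AUS inequality above. Implementing this splitting consistently along the $2k$-step telescope is the technical heart of the argument and the main new ingredient beyond \cite{KR}.
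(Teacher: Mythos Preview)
Your plan captures the two genuinely new ingredients (the interlacing forces the deepest iterated weak*-limit of the difference to vanish, and the finite-dimensional complement $Y^{**}=Y\oplus F$ lets one apply Proposition~\ref{waus} after projecting the base point into $Y$), but the organization differs from the paper's and the part you flag as ``the technical heart'' is exactly what the paper handles differently and more cleanly.

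The paper does \emph{not} run a $2k$-step telescope on the auxiliary map $F:G_{2k}(\Mdb)\to Y^{**}$. Instead it argues by induction on $k$ directly on $f$: one takes $g(\n)=w^*\text{-}\lim_{n}f(\n,n)$, splits $g=h+e$ along $Y\oplus E$, uses Ramsey plus norm compactness in $E$ to force $\|e(\n)-e(\m)\|\le\eta$ on a subset, applies the induction hypothesis to $g$ (hence to $h$), and then applies Proposition~\ref{waus} \emph{once} with base $h(\n)-h(\m)\in Y$ and weak*-null perturbation $u_{\n,\m,t,l}$. The point is that the $E$-component enters only as a single $\eta$-error per inductive step, so there is no compounding to track. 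In your $2k$-step scheme, the odd-indexed partial sums $F_{2j-1}$ have $E$-components of the form $e_j-e_{j-1}$ with \emph{different} indices, and it is not clear that a single Ramsey pass makes these uniformly small; making the iteration go through would require a more elaborate bookkeeping of the finite-dimensional errors than you indicate, essentially rediscovering the inductive structure. So the approach is viable in spirit, but as written it defers precisely the step that distinguishes the quasi-reflexive case from the reflexive one.

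One small correction: weak*-compactness of bounded subsets of $Y^{**}$ is automatic by Banach--Alaoglu and does not use quasi-reflexivity. What the argument actually needs is weak*-\emph{sequential} compactness (for the extraction), which follows from separability of $Y^*$; in the paper this is obtained from ``$Y$ separable and quasi-reflexive $\Rightarrow Y^*$ separable''. Quasi-reflexivity is thus used for that separability and for the finite-dimensional decomposition, not for compactness per se.
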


In fact, we will show a more general result, which follows the ideas of section 6 in \cite{KR}. Before to state it, we need some preparation. We briefly recall the setting of section 6 in \cite{KR}.\\
So let $Y$ be a Banach space and denote by $\overline \rho _Y$ its
modulus of asymptotic uniform smoothness. It is easily checked
that $\overline \rho _Y$ is an Orlicz function. Then we define the
Orlicz sequence space:
$$\ell_{\overline \rho _Y}=\Big\{x\in \Rdb^\Ndb,\ \exists r>0\ \
\sum_{n=1}^\infty \overline \rho _Y\big(\frac{|x_n|}{r}\big)<\infty\Big\},$$
equipped with the norm
$$\|x\|_{\overline \rho _Y}=\inf\Big\{r>0,\ \sum_{n=1}^\infty \overline \rho _Y\big(\frac{|x_n|}{r}\big)\le 1\Big\}.$$
Next we construct a sequence of norms $(N_k)_{k=1}^\infty$, where $N_k$ is a norm on $\Rdb^k$, as follows.\\
For all $\xi\in \Rdb$, $N_1(\xi)=|\xi|$.\\
$N_2(\xi,\eta)=|\eta|$ if $\xi=0$ and
$$N_2(\xi,\eta)=|\xi|\big(1+\overline \rho _Y\big(\frac{|\eta|}{|\xi|}\big)\big)\ \ {\rm if}\ \xi\neq 0.$$
Then, for $k\ge 3$, we define by induction the following norm on $\Rdb^k$: $$N_k(\xi_1,..,\xi_k)=N_2\big(N_{k-1}(\xi_1,..,\xi_{k-1}),\xi_k\big).$$

\medskip The following property is proved in \cite{KR}.

\begin{Prop}\label{KaltonNorm} For any $k\in \Ndb$ and any $a\in \Rdb^k$:
$$N_k(a)\le e\|a\|_{\overline \rho _Y}.$$
\end{Prop}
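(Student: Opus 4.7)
The plan is to prove the inequality by induction on $k$, after reducing to a normalized setting. By the positive homogeneity of both $N_k$ and $\|\cdot\|_{\overline{\rho}_Y}$, I may assume $r := \|a\|_{\overline{\rho}_Y} = 1$, so that $\sigma_k := \sum_{i=1}^k \overline{\rho}_Y(|a_i|) \leq 1$. Writing $\tau_k := N_k(a_1, \ldots, a_k)$, I shall prove the strengthened invariant $\tau_k \leq e^{\sigma_k}$, which yields $\tau_k \leq e$ since $\sigma_k \leq 1$. The argument rests on three elementary facts about $\overline{\rho}_Y$: (i) $\overline{\rho}_Y$ is convex with $\overline{\rho}_Y(0) = 0$, so $t \mapsto \overline{\rho}_Y(t)/t$ is nondecreasing and the perspective $s \mapsto s\,\overline{\rho}_Y(\alpha/s)$ is convex in $s$; (ii) $\overline{\rho}_Y(t) \leq t$ for all $t \geq 0$ (triangle inequality); (iii) $\overline{\rho}_Y(t) \geq t - 1$ for $t \geq 1$ (reverse triangle inequality, valid for the infinite-dimensional $Y$ of interest).

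The base case $k = 1$ reads $|a_1| \leq e^{\overline{\rho}_Y(|a_1|)}$: if $|a_1| \leq 1$ this is trivial, and if $|a_1| > 1$ then (iii) together with $\log t \leq t - 1$ yields $\log|a_1| \leq \overline{\rho}_Y(|a_1|)$. For the inductive step I use the recursion $\tau_k = \tau_{k-1}\bigl(1 + \overline{\rho}_Y(|a_k|/\tau_{k-1})\bigr)$ and split on $\tau_{k-1}$. When $\tau_{k-1} \geq 1$ (the easy case), property (i) with $\lambda = 1/\tau_{k-1} \leq 1$ gives $\overline{\rho}_Y(|a_k|/\tau_{k-1}) \leq \overline{\rho}_Y(|a_k|)/\tau_{k-1}$, hence
$$\tau_k \leq \tau_{k-1} + \overline{\rho}_Y(|a_k|) \leq \tau_{k-1}\bigl(1 + \overline{\rho}_Y(|a_k|)\bigr) \leq \tau_{k-1}\,e^{\overline{\rho}_Y(|a_k|)} \leq e^{\sigma_k}.$$

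The main obstacle is the hard case $\tau_{k-1} < 1$, where the convexity estimate above runs in the wrong direction. I resolve it via the pointwise inequality
$$f(s) := s\bigl(1 + \overline{\rho}_Y(\alpha/s)\bigr) \leq e^{\overline{\rho}_Y(\alpha)} \qquad \text{for all } s \in (0, 1],\ \alpha \geq 0,$$
which, applied with $s = \tau_{k-1}$ and $\alpha = |a_k|$, gives $\tau_k \leq e^{\overline{\rho}_Y(|a_k|)} \leq e^{\sigma_k}$. To establish this inequality, I observe that $f$ is convex on $(0, \infty)$ (sum of the linear $s$ and the perspective of the convex function $\overline{\rho}_Y$), so $\sup_{(0,1]} f = \max\bigl(f(1), \lim_{s \to 0^+} f(s)\bigr)$. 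At $s = 1$, $f(1) = 1 + \overline{\rho}_Y(\alpha) \leq e^{\overline{\rho}_Y(\alpha)}$ by $1 + x \leq e^x$. As $s \to 0^+$, $s\,\overline{\rho}_Y(\alpha/s) \to \alpha L$ with $L := \lim_{t \to \infty}\overline{\rho}_Y(t)/t \in [0,1]$, so $f(s) \to \alpha L \leq \alpha$; the bound $\alpha \leq e^{\overline{\rho}_Y(\alpha)}$ is trivial for $\alpha \leq 1$ and follows from (iii) together with $e^{\alpha - 1} \geq \alpha$ when $\alpha > 1$. Both endpoint values thus lie below $e^{\overline{\rho}_Y(\alpha)}$, and convexity propagates the bound to the entire interval, completing the proof.
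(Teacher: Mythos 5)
The paper itself does not prove this proposition --- it is quoted from \cite{KR} --- so your argument has to stand as a self-contained proof, and in substance it does. The overall scheme is sound and is essentially the standard one: normalize so that $\sum_i\overline{\rho}_Y(|a_i|)\le 1$, propagate the strengthened invariant $N_j(a_1,\dots,a_j)\le\exp\bigl(\sum_{i\le j}\overline{\rho}_Y(|a_i|)\bigr)$ through the recursion, and split on whether $N_{j-1}\ge 1$. Your treatment of the hard case via convexity of the perspective $s\mapsto s\,\overline{\rho}_Y(\alpha/s)$ and evaluation at the two endpoints of $(0,1]$ is a clean way to organize it.

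Two points need attention, one of them substantive. First, your fact (iii) is true but your stated justification does not prove it: the reverse triangle inequality only gives $\|x+ty\|\ge t-1$, hence $\overline{\rho}_Y(t)\ge t-2$, and this weaker bound is insufficient (the inequality $e^{\alpha-2}\ge\alpha$ that you would then need fails at $\alpha=2$). To get $\overline{\rho}_Y(t)\ge t-1$ you must exploit the symmetry of the unit sphere of the finite-codimensional subspace $Y_0$: since $y\in S_{Y_0}$ iff $-y\in S_{Y_0}$, one has $\sup_{y\in S_{Y_0}}\|x+ty\|\ge\tfrac12\bigl(\|x+ty\|+\|x-ty\|\bigr)\ge t$, whence $\overline{\rho}_Y(t)\ge t-1$. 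This is not a cosmetic repair: the proposition is false for a general Orlicz function (take $\overline{\rho}(t)=t^2/100$ and $k=1$), so some quantitative lower bound on $\overline{\rho}_Y$ specific to moduli of asymptotic smoothness must enter somewhere, and this is exactly where it does in your proof. Second, a trivial omission: the recursion $\tau_k=\tau_{k-1}\bigl(1+\overline{\rho}_Y(|a_k|/\tau_{k-1})\bigr)$ presupposes $\tau_{k-1}\neq 0$; if $\tau_{k-1}=0$ (i.e.\ $a_1=\dots=a_{k-1}=0$, which can occur since the statement allows arbitrary $a\in\Rdb^k$) then $\tau_k=|a_k|$ by definition of $N_2$ and your base-case computation applies directly. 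Neither issue affects the validity of the overall argument once fixed.
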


\noindent Fix now $a=(a_1,..,a_k)$ a sequence of non zero real numbers and define the
following distance on $G_k(\Mdb)$, for $\Mdb$ infinite subset of $\Ndb$:
$$\forall \n,\m \in G_k(\Mdb),\ \ d_a(\n,\m)=\sum_{j,\ n_j\neq m_j} |a_j|.$$

We can now state our general result, from which Theorem \ref{QRAUS} is easily deduced.

\begin{Thm}\label{QRAUS+} Let $Y$ be a  quasi-reflexive Banach space, $a=(a_1,..,a_k)$ a sequence of non zero real numbers, $\Mdb$ an infinite subset of $\Ndb$ and let \\
$f:(G_k(\Mdb),d_a)\to Y^{**}$ be a Lipschitz map.
\\Then for any $\eps>0$ there exists an infinite subset $\Mdb'$ of $\Mdb$, such that
$$\forall (\n,\m) \in I_k(\Mdb')\ \ \ \|f(\n) - f(\m)\| \leq 2e Lip(f)\|a\|_{\overline \rho _Y} +\eps.$$
\end{Thm}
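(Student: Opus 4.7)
The plan is to transpose to $Y^{**}$ the strategy of Section~6 of \cite{KR}, following the same adaptation that N\'etillard used for the James space; the factor $2$ in the target (versus the factor $1$ of Theorem~\ref{KaRa}) already signals a one-for-each-side splitting that becomes necessary when the intermediate weak$^*$-limits of $f$ do not lie in $Y$.

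First, by iterated weak$^*$-compactness of $B_{Y^{**}}$ applied to the countably generated image of $f$ (combined with a Ramsey-type stabilisation to bypass non-metrizability), pass to an infinite $\Mdb' \subset \Mdb$ on which the weak$^*$-limits
$$g_{j-1}(\overline n) := w^*\text{-}\lim_{m \in \Mdb',\, m \to \infty} g_j(\overline n, m) \qquad (\overline n \in G_{j-1}(\Mdb'))$$
exist for each $1 \le j \le k$, starting from $g_k = f|_{G_k(\Mdb')}$ and ending with a constant $g_0 \in Y^{**}$. Replacing $f$ by $f - g_0$, which preserves $\text{Lip}(f)$, I may assume $g_0 = 0$. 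For $(\overline n, \overline m) \in I_k(\Mdb')$ the triangle inequality $\|f(\overline n) - f(\overline m)\| \le \|f(\overline n)\| + \|f(\overline m)\|$ reduces the problem to the single-vertex estimate $\|f(\overline n)\| \le N_k(a)\text{Lip}(f) + \varepsilon/2$; combined with Proposition~\ref{KaltonNorm} this delivers the announced bound $2e\text{Lip}(f)\|a\|_{\overline\rho_Y} + \varepsilon$.

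The single-vertex bound is obtained by induction on $j$: after a further sparsening of $\Mdb'$, $\|g_j(\overline n)\| \le N_j(a_1,\dots,a_j)\text{Lip}(f) + \eta_j$ for all $\overline n \in G_j(\Mdb')$, with $\sum_j \eta_j < \varepsilon/2$. The case $j=1$ follows from $\|g_1(n_1) - g_1(m)\| \le \text{Lip}(f)|a_1|$ and the weak$^*$-lower semicontinuity of the norm as $m\to\infty$. For the inductive step, write $g_j(\overline n, m) = g_{j-1}(\overline n) + v_m$, where $v_m \to 0$ weak$^*$ as $m\to\infty$ and $\|v_m\| \le \text{Lip}(f)|a_j|$; Proposition~\ref{waus} then yields $\limsup_m \|g_{j-1}(\overline n) + v_m\| \le N_2(\|g_{j-1}(\overline n)\|, \text{Lip}(f)|a_j|)$, matching exactly the recursive definition of $N_j$, and choosing $m = n_j$ large enough in $\Mdb'$ brings the actual value within $\eta_j$ of the $\limsup$.

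The central obstruction is that Proposition~\ref{waus} requires its reference vector to lie in $S_Y$, whereas a priori $g_{j-1}(\overline n) \in Y^{**}$. Here quasi-reflexivity enters: fix a decomposition $Y^{**} = Y \oplus E$ with $\dim E < \infty$ and write $g_{j-1}(\overline n) = y_{j-1}(\overline n) + e_{j-1}(\overline n)$. Because $E$ is finite-dimensional the weak$^*$ and norm topologies coincide on it, and because the iterated weak$^*$-limits produce $g_0 = 0$, a further sparsening of $\Mdb'$ makes $\|e_{j-1}(\overline n)\|$ arbitrarily small at every level. The analogous decomposition $v_m = v_m^Y + v_m^E$ yields $\|v_m^E\| \to 0$ in norm and $v_m^Y$ weakly null in $Y$ with essentially the same norm bound. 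Proposition~\ref{waus} now applies to $y_{j-1}(\overline n)/\|y_{j-1}(\overline n)\| \in S_Y$ with perturbation $v_m^Y$, and the residual $E$-contributions are absorbed into $\eta_j$, closing the induction.
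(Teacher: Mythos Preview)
Your reduction to a single-vertex estimate is a genuine gap, and in fact the estimate you aim for is false whenever $Y$ is not reflexive. Indeed, if for some infinite $\Mdb'$ one had $\|f(\overline n)-g_0\|\le N_k(a)\,\Lip(f)+\eps/2$ for \emph{all} $\overline n\in G_k(\Mdb')$, this would bound $\diam f(G_k(\Mdb'))$ by $2N_k(a)\,\Lip(f)+\eps$; but the Remark at the end of Section~2 exhibits, via James' theorem, a $2$-Lipschitz $f:G_k(\Ndb)\to Y$ (with $a=(1,\dots,1)$) for which $\diam f(G_k(\Mdb))\ge \theta k$ on every infinite $\Mdb$. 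So the triangle-inequality splitting $\|f(\overline n)-f(\overline m)\|\le\|f(\overline n)\|+\|f(\overline m)\|$ throws away exactly the cancellation that makes the theorem true.

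The specific step that fails is the claim that ``a further sparsening of $\Mdb'$ makes $\|e_{j-1}(\overline n)\|$ arbitrarily small at every level.'' The projection $P_E:Y^{**}\to E$ along $Y$ is \emph{never} weak$^*$-continuous when $E\neq\{0\}$: any weak$^*$-continuous functional on $Y^{**}$ vanishing on the weak$^*$-dense subspace $Y$ is identically zero. Hence the chain of weak$^*$-limits $g_j\to g_{j-1}\to\cdots\to g_0=0$ gives no information whatsoever about the $E$-components $e_{j-1}(\overline n)$; Ramsey and finite-dimensionality only let you make $e_{j-1}$ nearly \emph{constant}, not nearly \emph{zero}. (For the same reason, your claim that $v_m^E\to 0$ in norm is unfounded.) The paper's proof circumvents this by carrying the induction on the \emph{differences} $g(\overline n)-g(\overline m)$ over interlaced pairs: once $e(\overline n)-e(\overline m)$ is small, the vector $h(\overline n)-h(\overline m)$ lies in $Y$ and Proposition~\ref{waus} applies to it directly, with the weak$^*$-null perturbation $u_{\overline n,\overline m,t,l}$. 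This is precisely why the conclusion is restricted to $I_k(\Mdb')$ rather than to all of $G_k(\Mdb')\times G_k(\Mdb')$.
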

\begin{proof} Under the assumptions of Theorem \ref{QRAUS+}, we will show that  there exists an infinite subset $\Mdb'$ of $\Mdb$, such that
$$\forall (\n,\m) \in I_k(\Mdb')\ \ \ \|f(\n) - f(\m)\| \leq 2 \Lip(f)N_k(a) +\eps.$$
Then, the conclusion will follow from Proposition \ref{KaltonNorm}.

Since the graphs $G_k(\Ndb)$ are all countable, we may assume that $Y$ is separable. We can write $Y^{**}=Y\oplus E$, where $E$ is finite dimensional.

We will prove our statement by induction on $k \in \Ndb$. It is clearly true for $k=1$, so assume it is true for $k\in \Ndb$ and let $a=(a_1,..,a_{k+1})$ be a sequence of non zero real numbers and $f:(G_{k+1}(\Mdb),d_a)\to Y^{**}$ be a Lipschitz map. Let $\eps>0$ and fix $\eta \in (0,\frac{\eps}{2})$ (our initial choice of a small $\eta$ will be made precise later).\\
Since $Y$ is separable and quasi-reflexive, $Y^{*}$ is also separable. So, using weak$^*$-compactness in $Y^{**}$, we can find an infinite subset $\Mdb_0$ of $\Mdb$ such that
$$\forall \n \in G_{k}(\Mdb_0)\ \ \ \  w^*-\lim_{n_k\in \Mdb_0}f(\n,n_k)=g(\n)\in Y^{**}.$$
Using the weak$^*$ lower semi continuity of $\|\ \|_{Y^{**}}$ we get that the map\\
$g:G_{k}(\Mdb_0)\to Y^{**}$ satisfies $\Lip(g)\le \Lip(f)$. For $\n\in G_{k}(\Mdb_0)$, we can write $g(\n)=h(\n)+e(\n)$, with $h(\n)\in Y$ and $e(\n)\in E$. It then follows from Ramsey's theorem and the norm compactness of bounded sets in $E$ that there exists an infinite subset $\Mdb_1$ of $\Mdb_0$ such that
$$\forall \n,\m \in G_k(\Mdb_1)\ \ \|e(\n)-e(\m)\|\le \eta.$$
For $\n,\m \in G_{k}(\Mdb_1)$ and $t,l\in \Mdb_1$, set
$$u_{\n,\m,t,l} = \big(f(\n,t) - g(\n)\big) - \big(f(\m,l)-g(\m)\big).$$
Since $\|\ \|_{Y^{**}}$ is weak$^*$ lower semi continuous, we have that $$\|u_{\n,\m,t,l}\|\le 2\Lip(f)|a_{k+1}|.$$
On the other hand, it follows from our induction hypothesis that there exists an infinite subset $\Mdb_2$ of $\Mdb_1$ such that
$$\forall (\n,\m)\in I_k(\Mdb_2)\ \ \ \|g(\n)-g(\m)\|\le 2\Lip(f)N_k(a_1,..,a_k)+\eta.$$
Therefore
$$\forall (\n,\m)\in I_k(\Mdb_2)\ \ \ \|h(\n)-h(\m)\|\le 2\Lip(f)N_k(a_1,..,a_k)+2\eta.$$
Assume first that $h(\n)\neq h(\m)$. Then it follows from Proposition  \ref{waus} and the fact that $u_{\n,\m,t,l}$ is tending to $0$ in the weak$^*$ topology, as $t,l$ tend to $\infty$, that there exists $N_{(\n,\m)}\in \Mdb_2$ such that for all $t,l\in \Mdb_2$ satisfying $t,l\ge N_{(\n,\m)}$:
$$\|h(\n)-h(\m)+u_{\n,\m,t,l}\|\le \|h(\n)-h(\m)\|
\Big(1+\overline{\rho}_Y\big(\frac{2\Lip(f)|a_{k+1}|}{\|h(\n)-h(\m)\|}\big)\Big)+\eta$$
$$\le N_2\big(\|h(\n)-h(\m)\|,2\Lip(f)|a_{k+1}|\big)+\eta.$$
Note that if $h(\n)=h(\m)$, the above inequality is clearly true for all $t,l \in \Mdb_2$.\\
Therefore, we have that for all $(\n,\m)\in I_k(\Mdb_2)$ there exists $N_{(\n,\m)}\in \Mdb_2$ such that for all $t,l\ge N_{(\n,\m)}$:
$$\|h(\n)-h(\m)+u_{\n,\m,t,l}\|\le N_2\big(2\Lip(f)N_k(a_1,..,a_k)+2\eta,2\Lip(f)|a_{k+1}|\big)+\eta$$
$$\le 2\Lip(f)N_{k+1}(a_1,..,a_{k+1})+\frac{\eps}{2},$$
if $\eta$ was initially chosen small enough.\\
So we have proved that for all $(\n,\m)\in I_k(\Mdb_2)$ there exist $N_{(\n,\m)}\in \Mdb_2$ such that for all $t,l\ge N_{(\n,\m)}$:

\begin{equation}\label{eq1}
\|f(\n,t)-f(\m,l)\|\le 2\Lip(f)N_{k+1}(a_1,..,a_{k+1})+\eps.
\end{equation}

We now wish to construct $\Mdb'$ infinite subset of $\Mdb_2$ satisfying our conclusion. For that purpose, for a finite subset $F$ of $\Ndb$ of cardinality at least $2k$, we denote $I_k(F)$ the set of all $(\n,\m)\in I_k(\Ndb)$ such that $n_1<m_1<..<n_k<m_k\in F$.\\
Assume now $\Mdb_2=\{n_1<..<n_j<..\}$. We shall define inductively the elements of our set $\Mdb'=\{m_1<...<m_j<..\}$.\\
First, we set $m_1=n_1,...,m_{2k}=n_{2k}$. Then, for $j>2k$, we define $m_j=n_{\phi(j)}>m_{j-1}$ in such a way that for all $(\n,\m)\in I_k(\{m_1,..,m_{j-1}\})$ we have that $m_j\ge N_{(\n,\m)}$. Then, it should be clear from equation (\ref{eq1}) and the construction of $\Mdb'$ that
$$\forall (\n,\m) \in I_{k+1}(\Mdb')\ \ \ \|f(\n) - f(\m)\| \leq 2\Lip(f)N_{k+1}(a_1,..,a_{k+1}) + \eps.$$
This finishes our induction.
\end{proof}

\noindent{\bf Remark.} Suppose now that $Y$ is a non reflexive Banach space and fix $\theta \in (0,1)$. Then, James' Theorem (see \cite{James}) insures the existence of a sequence
$(x_n)_{n=1}^\infty$ in $S_X$ and a sequence $(x_n^*)_{n=1}^\infty$ in $S_{X^*}$ such that
$$x_n^*(x_i)=\theta\ {\rm if}\ n\le i\ \ {\rm and}\ \ x_n^*(x_i)=0\ {\rm if}\ n>i.$$
In particular, for all $n_1<..<n_k<m_1<..<m_k$:
\begin{equation}\label{james}
\|x_{n_1}+..+x_{n_k}-(x_{m_1}+..+x_{m_k})\|\ge \theta k.
\end{equation}
Define now, for $k\in \Ndb$ and $\n\in G_k(\Ndb)$, $f(\n)=x_{n_1}+..+x_{n_k}$. Then $f$ is clearly 2-Lipschitz. On the other hand, it follows from (\ref{james}) that for any infinite subset $\Mdb$ of $\Ndb$, $\diam \big(f(G_k(\Mdb))\big)\ge \theta k$. This shows that the conclusion of Theorem \ref{KaRa} cannot hold as soon as $Y$ is non reflexive. This obstacle is overcome in our Theorems \ref{QRAUS} and \ref{QRAUS+} by considering particular pairs of elements in $G_k(\Ndb)$ that are $k$-separated, namely the strictly interlaced pairs from $I_k(\Ndb)$. Note also that the pairs considered in our above application of James' Theorem are $k$-separated but at the ``opposite'' of being interlaced, since $n_1<..<n_k<m_1<..<m_k$.

\section{Applications.}

Let us start with the following definitions.

\begin{Def} \

(i) A Banach space $X$ has the {\it Banach-Saks property} if every bounded sequence in $X$ admits a subsequence whose Ces\`{a}ro means converge in norm.

(ii) A Banach space $X$ has the {\it alternating Banach-Saks property} if for every bounded sequence $(x_n)_{n=1}^\infty$ in $X$, there exists a subsequence $(x_{n_k})_{k=1}^\infty$ of $(x_n)_{n=1}^\infty$ and a sequence $(\eps_k)_{k=1}^\infty \in \{-1,1\}^\Ndb$ such that the Ces\`{a}ro means of the sequence $(\eps_kx_{n_k})_{k=1}^\infty$ converge in norm.

\end{Def}

Our last remark of section 2 was used in \cite{BKL} (Theorem 4.1), to show that if a Banach space coarse Lipschitz embeds into a reflexive AUS Banach space, then $X$ is reflexive. This result, was recently improved by B.M Braga \cite{Br} (Theorem 1.3) who showed that actually $X$ must have the Banach-Saks property (which clearly implies reflexivity). As a first application of our result we obtain.

\begin{Prop}\label{ABS} Assume that $X$ is a Banach space which coarse Lipschitz embeds into a quasi reflexive AUS Banach space $Y$. Then $X$ has the alternating Banach-Saks property.
\end{Prop}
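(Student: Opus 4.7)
The plan is to argue by contradiction, combining Theorem~\ref{QRAUS+} with a spreading-model characterization of the alternating Banach-Saks property. Assume $f\colon X\to Y$ is a coarse Lipschitz embedding with $\omega_f(t)\le Bt+D$ and $\rho_f(t)\geq At-C$, and suppose for contradiction that $X$ fails the alternating Banach-Saks property. The first task is to translate this failure into a uniform lower $\ell_1$ estimate on a suitable sequence of $X$: by a classical theorem of Beauzamy (obtained via the Brunel-Sucheston spreading-model construction and Ramsey extraction), $X$ must admit an $\ell_1$-spreading model, so after passing to a subsequence we obtain $(x_n)\subset B_X$ and $\delta>0$ with
$$\Big\|\sum_{i=1}^N \eps_i x_{n_i}\Big\|_X \geq \delta N$$
for every $N\in\Ndb$, every $n_1<\cdots<n_N$, and every choice of signs $\eps_i\in\{-1,+1\}$.

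Next, for each $k$ define $\phi_k\colon G_k(\Ndb)\to Y\subset Y^{**}$ by
$$\phi_k(\n)=f\Big(\sum_{i=1}^k x_{n_i}\Big).$$
If $\n,\m\in G_k(\Ndb)$ differ in $j$ positions, then $\|\sum x_{n_i}-\sum x_{m_i}\|_X\leq 2j$, so $\|\phi_k(\n)-\phi_k(\m)\|_Y\leq(2B+D)j$; thus $\Lip(\phi_k)\leq L:=2B+D$ uniformly in $k$. For a strictly interlaced pair $(\n,\m)\in I_k(\Ndb)$, the difference $\sum x_{n_i}-\sum x_{m_i}$ is a signed sum of $2k$ distinct terms of $(x_n)$, so the lower $\ell_1$ estimate gives $\|\sum x_{n_i}-\sum x_{m_i}\|_X\geq 2\delta k$, whence
$$\|\phi_k(\n)-\phi_k(\m)\|_Y\geq 2A\delta k-C.$$

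Finally, apply Theorem~\ref{QRAUS+} to $\phi_k$ with $a=(1,\ldots,1)\in\Rdb^k$ (so that $d_a$ coincides with the graph distance $d$) and $\eps=1$; this produces, for each $k$, an infinite $\Mdb'\subset\Ndb$ with
$$\|\phi_k(\n)-\phi_k(\m)\|\leq 2eL\,\|(1,\ldots,1)\|_{\overline\rho_Y}+1 \qquad\text{for all }(\n,\m)\in I_k(\Mdb').$$
Because $Y$ is AUS, $\overline\rho_Y(t)/t\to 0$ as $t\to 0$, and a short computation with the Orlicz norm (for each fixed $\alpha>0$, the inequality $k\,\overline\rho_Y(1/(\alpha k))\le 1$ holds for all large $k$) yields $\|(1,\ldots,1)\|_{\overline\rho_Y}=o(k)$ as $k\to\infty$. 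Since the lower $\ell_1$ bound holds on every interlaced pair of $\Ndb$, in particular on any $(\n,\m)\in I_k(\Mdb')$, the linear bound $2A\delta k-C$ eventually exceeds the sublinear upper bound, the desired contradiction.

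The one non-routine step is the first one, namely the reduction from failure of the alternating Banach-Saks property to the uniform lower $\ell_1$ estimate. Once this is in place, the argument is a direct transplant to $Y^{**}$ of the mechanism used by Braga in the reflexive case (Theorem~\ref{Braga}), the only novelty being that Theorem~\ref{QRAUS+} is now available in the quasi-reflexive setting, at the price of restricting to strictly interlaced pairs -- which is exactly what is needed to accommodate the signed-sum structure coming from Beauzamy's lower estimate.
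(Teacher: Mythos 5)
Your argument is correct, and it follows the same overall strategy as the paper's proof (contradiction via Beauzamy's characterization, the maps $\n\mapsto f(x_{n_1}+\dots+x_{n_k})$, and comparison of growth rates on interlaced pairs), but it differs in one genuine and worthwhile respect: where you appeal to Theorem~\ref{QRAUS+} directly with $a=(1,\dots,1)$ and the elementary estimate $\|(1,\dots,1)\|_{\overline\rho_Y}=o(k)$ (which indeed follows from $\overline\rho_Y(t)/t\to 0$ exactly as you compute), the paper first upgrades ``AUS'' to ``$p$-AUS for some $p>1$'' via the renorming results of Knaust--Odell--Schlumprecht \cite{KOS} and Raja \cite{Ra}, and then applies Theorem~\ref{QRAUS} to get the bound $Ck^{1/p}$. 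Your route buys a more self-contained proof (no renorming theorem needed, and a sublinear bound $o(k)$ suffices against the linear lower bound), at the cost of a slightly less quantitative conclusion; the paper's route yields the explicit power bound $k^{1/p}$, which is the form needed for the quantitative compression results later in the section. Two minor points: the paper's quotation of Beauzamy (Theorem II.2 of \cite{Be}) gives the two-sided estimate $\tfrac12\le\|\tfrac1k\sum\eps_i x_{n_i}\|\le\tfrac32$ with no restriction on the indices, so your unconstrained lower $\ell_1$ estimate is available as stated (and even if one only had the spreading-model version requiring $k\le n_1$, it would be harmless: for each $k$ you only need one interlaced pair in $I_k(\Mdb')$, which you may take far out in $\Mdb'$); and your handling of the additive constants $C,D$ in the coarse Lipschitz moduli replaces the paper's ``linear change of variable'' and works equally well, since the lower bound $2A\delta k-C$ is linear while the upper bound is sublinear.
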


\begin{proof} Since $Y$ is AUS, there exists $p\in (1,\infty)$ such that $Y$ is p-AUS. This a consequence of a result of Knaust, Odell and Schlumprecht \cite{KOS} in the separable case and of the second named author for the general case \cite{Ra}.\\
Assume also that $X$ does not have the alternating Banach-Saks property. Then it follows from the work of B. Beauzamy (Theorem II.2 in \cite{Be}) that there exists a sequence $(x_n)_{n=1}^\infty$ in $X$ such that for all $k\in \Ndb$, all $\eps_1,..,\eps_k \in \{-1,1\}$ and all $n_1<..<n_k$:
\begin{equation}\label{notABS}
\frac12 \le \Big\|\frac1k \sum_{i=1}^k \eps_i x_{n_i}\Big\|\le \frac32.
\end{equation}
Assume now that $X$ coarse Lipschitz embeds into $Y$. Then, after a linear change of variable if necessary, there exists $f:X\to Y$ and $A,B>0$  such that
$$\forall x,x'\in X\ \ \ \|x-x'\|\ge \frac12\Rightarrow A\|x-x'\| \le \|f(x)-f(x')\|\le B\|x-x'\|.$$
We then define $\varphi_k: (G_k(\Ndb),d) \to X$ as follows:
$$\varphi_k(\n)=x_{n_1}+..+x_{n_k},\ \ \ {\rm for}\ \ \n=(n_1,..,n_k)\in G_k(\Ndb).$$
We clearly have that $\Lip(\varphi_k)\le 3$. Moreover, for $\n\neq \m \in G_k(\Ndb)$ we have $\|\varphi_k(\n)-\varphi_k(\m)\|\ge 1$. We deduce that for all $k\in \Ndb$, $\Lip(f \circ \varphi_k)\le 3B$.\\
It now follows from Theorem \ref{QRAUS} that there exists a constant $C>0$ such that for all $k\in \Ndb$, there is an infinite subset $\Mdb_k$ of $\Ndb$ so that
$$\forall (\n,\m)\in I_k(\Mdb_k)\ \ \|(f \circ \varphi_k)(\n)-(f \circ \varphi_k)(\n)\|\le Ck^{1/p}.$$
On the other hand, it follows from (\ref{notABS}) that for all $(\n,\m)\in I_k(\Mdb_k)$, we have: $\|\varphi_k(\n)-\varphi_k(\m)\|\ge k.$ Therefore
$$\forall (\n,\m)\in I_k(\Mdb_k)\ \ \|(f \circ \varphi_k)(\n)-(f \circ \varphi_k)(\m)\| \ge Ak.$$
This yields a contradiction for $k$ large enough.

\end{proof}

In order to state some more quantitative results, we will need the following definition.

\begin{Def} Let $q\in (1,\infty)$ and $X$ be a Banach space. We say that $X$ has the $q$-co-Banach-Saks property if for every semi-normalized weakly null sequence $(x_n)_{n=1}^\infty$ in $X$ there exists a subsequence $(x_{n_j})_{j=1}^\infty$ and $c>0$ such that for all $k\in \Ndb$ and all $k\le n_1<..<n_k$:
$$\|x_{n_1}+...+x_{n_k}\|\ge ck^{1/q}.$$
\end{Def}

For the proof of the following result, we refer the reader to Proposition 4.6 in \cite{K2}, or Proposition 2.3 in \cite{Br} and references therein.

\begin{Prop}\label{coBSAUC} Let $q\in (1,\infty)$ and $X$ be a Banach space. If $X$ is $q$-AUC, then $X$ has the $q$-co-Banach-Saks property.
\end{Prop}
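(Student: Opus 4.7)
The plan is an inductive extraction combined with a telescoping lower bound on $\|y_1 + \dots + y_j\|^q$, using $q$-AUC to control each increment and the convexity of $s \mapsto s^q$ to promote per-step gains into a uniform lower bound of order $k^{1/q}$. Let $(x_n)$ be semi-normalized and weakly null; after a preliminary subsequence extraction I may assume $\|x_n\| \ge a > 0$ for all $n$, and I let $c > 0$ denote the $q$-AUC constant, so that $\overline{\delta}_X(t) \ge c t^q$ for all $t \in [0,1]$.

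The first step is a homogeneous restatement of $q$-AUC: for any $u \in X$ with $\|u\| = s > 0$, any $\tau \in (0, s]$, and any $\eta > 0$, there is a finite-codimensional subspace $Y = Y(u, \tau, \eta) \subseteq X$, defined as the intersection of the kernels of finitely many functionals, such that
$$\|u + z\| \ \ge\ s + c \tau^q s^{1-q} - \eta \quad \text{for every } z \in Y \text{ with } \|z\| = \tau.$$
This is obtained by applying the definition of $\overline{\delta}_X$ to $u/\|u\|$ at scale $\tau/s$ and multiplying through by $s$. Because those finitely many functionals tend to $0$ along the weakly null sequence $(x_n)$, for any such $Y$ and any $\delta > 0$, $x_n$ decomposes as $z + v$ with $z \in Y$ and $\|v\| < \delta$ once $n$ is large enough.

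I then construct $(y_j) = (x_{n_j})$ inductively. Having chosen $y_1, \dots, y_{j-1}$ with partial sum $u_{j-1}$ of norm $s_{j-1}$, I apply the homogeneous form above to $u_{j-1}$ at scale $\tau = a/2$ with a tolerance $\eta_j$, producing a finite-codimensional $Y_j$; weak nullity then lets me pick $n_j$ large so that $x_{n_j} = z_j + v_j$ with $z_j \in Y_j$, $\|v_j\| < \eta_j$, and $\|z_j\| \ge a/2$. Writing $s_j = \|u_{j-1} + y_j\|$, the homogeneous AUC estimate applied to $(u_{j-1}, z_j)$, combined with the supporting-line inequality $s_j^q \ge s_{j-1}^q + q s_{j-1}^{q-1}(s_j - s_{j-1})$ (valid by convexity of $s \mapsto s^q$ since $s_j \ge s_{j-1}$), gives
$$s_j^q \ \ge\ s_{j-1}^q + q c (a/2)^q - q \widetilde{\eta}_j s_{j-1}^{q-1},$$
with $\widetilde\eta_j$ controlled by $\eta_j + \|v_j\|$. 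If the schedule $(\eta_j)$ is fixed in advance decaying fast enough (using the crude a priori bound $s_{j-1} \le (j-1) \sup_n \|x_n\|$ to make $\widetilde\eta_j s_{j-1}^{q-1}$ summable and small compared to $j$), telescoping from $1$ to $k$ yields $s_k^q \ge c' k$, hence $s_k \ge (c')^{1/q} k^{1/q}$.

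The form of the conclusion with the constraint $k \le n_1 < \dots < n_k$ is produced by a standard diagonal refinement of this construction: for each $k$ one extracts a subsequence along which the above estimate holds for every $k$-element sum taken from its tail past position $k$, then diagonalizes. The main obstacle is the bookkeeping: the subspace $Y_j$ depends on the history $u_{j-1}$, so the tolerances $\eta_j$ must be declared before the construction begins, yet rapidly enough that the downstream errors (which carry the factor $s_{j-1}^{q-1}$, itself unknown in advance) are negligible. This is resolved by the crude a priori bound on $s_{j-1}$ noted above; apart from that, the argument is a direct combination of the homogenized AUC inequality with the convexity of $s \mapsto s^q$.
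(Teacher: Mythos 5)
The paper does not actually prove Proposition \ref{coBSAUC}: it quotes it from Proposition 4.6 in \cite{K2} and Proposition 2.3 in \cite{Br}. Your argument is in substance the standard proof behind those references: homogenize $\overline{\delta}_X(t)\ge ct^q$, insert each new vector of the weakly null sequence almost into the relevant finite-codimensional subspace, and telescope $s_j^q\ge s_{j-1}^q+qc(a/2)^q-q\widetilde\eta_j s_{j-1}^{q-1}$ via the supporting-line inequality, with the tolerances $\eta_j$ scheduled in advance against the a priori bound $s_{j-1}\le (j-1)\sup_n\|x_n\|$. That core is correct.

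Two places need to be made precise. First, your homogeneous estimate is stated for $\|z\|=\tau$ but applied to $z_j$ with $\|z_j\|\ge a/2$; this is repaired by convexity: with $y=z_j/\|z_j\|\in S_{Y_j}$, the map $t\mapsto\|u_{j-1}+ty\|$ is convex, equals $s_{j-1}$ at $t=0$ and already exceeds $s_{j-1}$ at $t=a/2$ once $\eta_j<c(a/2)^qs_{j-1}^{1-q}$, hence is nondecreasing on $[a/2,\infty)$, so the lower bound persists at $t=\|z_j\|$. Second, and more substantially, the greedy construction only controls the particular sums $y_1+\cdots+y_k$, whereas the $q$-co-Banach-Saks property demands the lower bound for every admissible selection, and your closing sentence (``for each $k$ one extracts \dots then diagonalizes'') does not say how the fixed-$k$ extraction is performed --- it faces exactly the same difficulty. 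The standard repair is a single extraction: when selecting $y_m=x_{n_m}$, intersect the finitely many finite-codimensional subspaces $Y(u_S,a/2,\eta_m)$ attached to all partial sums $u_S=\sum_{i\in S}y_i$ with $S\subseteq\{1,\dots,m-1\}$ (still finite-codimensional), and choose $n_m$ so large that $x_{n_m}$ lies within $\eta_m$ of this intersection while keeping $\|z_m\|\ge a/2$. Then any selection $j_1<\dots<j_k$ sees, at each step, a vector almost lying in the subspace attached to its own history, the telescoping runs verbatim, and your error schedule still works because a history preceding the choice of $y_{j_i}$ has fewer than $j_i$ terms, so its norm is at most $(j_i-1)\sup_n\|x_n\|$ (and, by induction along the selection, at least $a$, which legitimizes the AUC step $\tau\le s$). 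With these two amendments your proof is complete and coincides with the argument the paper cites.
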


Using our Theorem \ref{QRAUS} and adapting the arguments of Braga in \cite{Br} we obtain.

\begin{Thm}\label{qcoBS} Let $1<q<p$ in $(1,\infty)$. Assume that $X$ is an infinite dimensional Banach space with the $q$-co-Banach-Saks property and $Y$ is a $p$-AUS and quasi reflexive Banach space. Then $X$ does not coarse Lipschitz embed into $Y$. More precisely, the compression exponent $\alpha_Y(X)$ of $X$ into $Y$ satisfies the following.\\
(i) If $X$ contains an isomorphic copy of $\ell_1$, then $\alpha_Y(X)\le \frac{1}{p}$.\\
(ii) Otherwise, $\alpha_Y(X)\le \frac{q}{p}$.
\end{Thm}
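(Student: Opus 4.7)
For each $k$, the plan is to build a test map $\varphi_k : (G_k(\Ndb),d) \to X$ whose Lipschitz constant is bounded independently of $k$, and for which every strictly interlaced pair $(\n,\m) \in I_k(\Ndb)$ satisfies $\|\varphi_k(\n) - \varphi_k(\m)\| \geq c\sigma_k$, where $\sigma_k = k$ in case (i) and $\sigma_k = k^{1/q}$ in case (ii). Given a coarse embedding $f : X \to Y$ with $\rho_f(t) \geq At^\alpha - C$, recall that $\omega_f$ is automatically dominated by an affine function, so $f \circ \varphi_k$ is Lipschitz on $(G_k(\Ndb),d)$ with a constant bounded in $k$. Applying Theorem~\ref{QRAUS} to $f \circ \varphi_k$ (viewed as a map into $Y \subset Y^{**}$) produces an infinite $\Mdb' \subset \Ndb$ with
$$\|f(\varphi_k(\n)) - f(\varphi_k(\m))\| \leq C' k^{1/p} + \varepsilon \quad \text{for all } (\n,\m) \in I_k(\Mdb'),$$
while the compression inequality forces this quantity to be at least $A(c\sigma_k)^\alpha - C$. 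Letting $k \to \infty$ yields $\alpha \leq 1/p$ in case (i) and $\alpha \leq q/p$ in case (ii).

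\textbf{Construction in each case.} For case (i), the plan is to take $(x_n)_{n=1}^\infty$ in $X$ equivalent to the canonical $\ell_1$-basis and set $\varphi_k(\n) = x_{n_1} + \cdots + x_{n_k}$; the Lipschitz constant is controlled by $2\sup_n\|x_n\|$, and the $\ell_1$-equivalence directly yields $\|\varphi_k(\n) - \varphi_k(\m)\| \geq c \cdot 2k$ for any interlaced pair. For case (ii), since $X$ is infinite dimensional and contains no copy of $\ell_1$, Rosenthal's $\ell_1$-theorem together with the Bessaga--Pe\l czy\'nski selection principle produces a semi-normalized weakly null basic sequence in $X$ (for instance by taking differences $e_{n_{2k}} - e_{n_{2k-1}}$ of a weakly Cauchy subsequence of a normalized basic sequence). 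Applying the $q$-co-Banach-Saks hypothesis and extracting a further subsequence, one obtains a basic sequence $(x_n)$ with $\|\sum_{i=1}^k x_{n_i}\| \geq c\, k^{1/q}$ whenever $k \leq n_1 < \cdots < n_k$. Defining $\varphi_k$ by the same formula on $G_k(\{k,k+1,\dots\})$, the interlaced difference becomes an alternating sum $\sum_{i=1}^{2k}(-1)^{i+1} x_{l_i}$ along the interleaved indices $l_1 < \cdots < l_{2k}$; the canonical basis projection onto $\overline{\operatorname{span}}\{x_{l_i} : i \text{ odd}\}$, of norm at most the basis constant $K$, gives $\|\sum_{i=1}^k x_{n_i}\| \leq K \|\varphi_k(\n) - \varphi_k(\m)\|$, transferring the co-Banach-Saks lower bound to $\|\varphi_k(\n)-\varphi_k(\m)\| \geq (c/K) k^{1/q}$.

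\textbf{Main obstacle.} The chief difficulty is that Theorem~\ref{QRAUS} delivers its upper bound only along strictly interlaced pairs, whereas the $q$-co-Banach-Saks inequality naturally controls positive partial sums. Bridging these requires either the rigid $\ell_1$-geometry of case (i), or, in case (ii), passage to a basic weakly null subsequence so that the basis projection converts the alternating sum arising from an interlaced pair back into the positive sum on which the co-Banach-Saks estimate is available. The fact that $X$ is infinite dimensional and contains no copy of $\ell_1$ is precisely what secures the existence of such a basic weakly null sequence and makes this transfer possible.
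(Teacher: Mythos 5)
Your case (i) is essentially the paper's argument and is fine. The genuine gap is in case (ii), at the step where you transfer the $q$-co-Banach-Saks lower bound from the positive sum $\sum_{i=1}^k x_{n_i}$ to the interlaced difference $\varphi_k(\n)-\varphi_k(\m)=\sum_{i=1}^{2k}(-1)^{i+1}x_{l_i}$ via ``the canonical basis projection onto the closed span of $\{x_{l_i}:\ i\ \text{odd}\}$, of norm at most the basis constant $K$''. No such bound holds for a general basic sequence: the basis constant only controls projections onto initial segments, whereas the projection that keeps the odd-indexed vectors of an interleaved block and kills the even-indexed ones is uniformly bounded (in $k$) exactly when the sequence is unconditional. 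Your $(x_n)$ is merely a $2$-basic weakly null sequence, and a normalized weakly null sequence need not admit any unconditional subsequence (Maurey--Rosenthal); in such examples same-sign sums can grow like $k$ while alternating sums along interleaved blocks stay much smaller, which is precisely what your inequality $\|\sum_{i=1}^k x_{n_i}\|\le K\|\varphi_k(\n)-\varphi_k(\m)\|$ would forbid. So the uniform lower bound $\|\varphi_k(\n)-\varphi_k(\m)\|\ge (c/K)k^{1/q}$ for all interlaced pairs is unjustified, and the contradiction with the $Ck^{1/p}$ estimate of Theorem \ref{QRAUS} does not follow.

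The paper circumvents this by applying the $q$-co-Banach-Saks property not to $(x_n)$ itself but to a difference sequence. One first runs Theorem \ref{QRAUS} for every $k$, arranging $\Mdb_{k+1}\subset\Mdb_k$, and diagonalizes to obtain $\Mdb=\{r_1<r_2<\cdots\}$; then $z_n=x_{r_{2n}}-x_{r_{2n+1}}$ is semi-normalized (because $(x_n)$ is $2$-basic) and weakly null, so the hypothesis yields a subsequence and $d>0$ with $\|\sum_{j=1}^k z_{n_j}\|\ge dk^{1/q}$ whenever $k\le n_1<\cdots<n_k$. These partial sums are exactly (up to the scaling factor in $\varphi_k$) the differences $\varphi_k(r_{2n_1},\ldots,r_{2n_k})-\varphi_k(r_{2n_1+1},\ldots,r_{2n_k+1})$, i.e. differences along specific interlaced pairs which, thanks to the diagonal construction, belong to $I_k(\Mdb_k)$, where the $Ck^{1/p}$ upper bound applies. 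In this way the lower estimate is obtained directly on the alternating sums, with no unconditionality needed; this is the missing idea in your argument, and some such coordination between the co-Banach-Saks extraction and the sets produced by Theorem \ref{QRAUS} would be needed in any repair.
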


\begin{proof} We follow the proof of Theorem 4.1 in \cite{Br}.

\smallskip
Assume first that $X$ contains an isomorphic copy of $\ell_1$ and that $\alpha_Y(X)> \frac{1}{p}$. Then there exists $f:X\to Y$ and $\theta,A,B>0$ and $\alpha>\frac{1}{p}$ such that
$$\forall x,x'\in X\ \ \ \|x-x'\|\ge \theta\Rightarrow A\|x-x'\|^\alpha \le \|f(x)-f(x')\|\le B\|x-x'\|.$$
Let $(x_n)_{n=1}^\infty$ be a sequence in $X$ which is equivalent to the canonical basis of $\ell_1$. We may assume, after a dilation, that there exists $K\ge 1$ such that
$$\forall a_1,..,a_k \in \Rdb,\ \ \ \theta\sum_{i=1}^k |a_i|\le \Big\|\sum_{i=1}^k a_i x_i\Big\| \le K\theta\sum_{i=1}^k |a_i|.$$
Then define $\varphi_k:(G_k(\Ndb),d)\to X$ by
$$\varphi_k(\n)=x_{n_1}+..+x_{n_k},\ \ \ {\rm for}\ \ \n=(n_1,..,n_k)\in G_k(\Ndb).$$
We clearly have that $\Lip(\varphi_k)\le 2K\theta$. Moreover, for $\n\neq \m \in G_k(\Ndb)$ we have $\|\varphi_k(\n)-\varphi_k(\m)\|\ge 2\theta$. We deduce that for all $k\in \Ndb$, $\Lip(f \circ \varphi_k)\le 2\theta KB$.\\
It now follows from Theorem \ref{QRAUS} that there exists a constant $C>0$ such that for all $k\in \Ndb$, there is an infinite subset $\Mdb_k$ of $\Ndb$ so that
$$\forall (\n,\m)\in I_k(\Mdb_k)\ \ \|(f \circ \varphi_k)(\n)-(f \circ \varphi_k)(\n)\|\le Ck^{1/p}.$$
On the other hand, for all $(\n,\m)\in I_k(\Mdb_k)$, $\|\varphi_k(\n)-\varphi_k(\m)\|\ge 2k\theta.$ Therefore
$$\forall (\n,\m)\in I_k(\Mdb_k)\ \ \|(f \circ \varphi_k)(\n)-(f \circ \varphi_k)(\m)\| \ge A2^\alpha\theta^\alpha k^\alpha.$$
This yields a contradiction if $k$ was chosen large enough.

\medskip
Assume now that $X$ does not contain an isomorphic copy of $\ell_1$ and that $\alpha_Y(X)> \frac{q}{p}$. Then there exists $f:X\to Y$,  $\alpha>\frac{q}{p}$ and $\theta,A,B>0$ such that
$$\forall x,x'\in X\ \ \ \|x-x'\|\ge \theta\Rightarrow A\|x-x'\|^\alpha \le \|f(x)-f(x')\|\le B\|x-x'\|.$$
Now, by Rosenthal's theorem, we can pick a normalized weakly null sequence $(x_n)_{n=1}^\infty$ in $X$. By extracting a subsequence, we may also assume that $(x_n)_{n=1}^\infty$ is a 2-basic sequence in $X$. We then define $\varphi_k: (G_k(\Ndb),d) \to X$ as follows:
$$\varphi_k(\n)=2\theta(x_{n_1}+..+x_{n_k}),\ \ \ {\rm for}\ \ \n=(n_1,..,n_k)\in G_k(\Ndb).$$
Note that $\Lip(\varphi_k)\le 4\theta$. Since $(x_n)_{n=1}^\infty$ is 2-basic, we have that for all $\n\neq \m \in G_k(\Ndb)$, $\|\varphi_k(\n)-\varphi_k(\m)\|\ge \theta$. Therefore $\Lip(f \circ \varphi_k)\le 4\theta B$, for all $k\in \Ndb$.\\
It now follows from Theorem \ref{QRAUS} that, for any $k\in \Ndb$, there exists an infinite subset $\Mdb_k$ of $\Ndb$ such that
\begin{equation}\label{interlaced}
\forall (\n,\m)\in I_k(\Mdb_k)\ \ \|(f \circ \varphi_k)(\n)-(f \circ \varphi_k)(\m)\|\le Ck^{1/p},
\end{equation}
where $C>0$ is a constant independent of $k$.\\
We can make sure in our construction that for all $k\in \Ndb$, $\Mdb_{k+1}\subset \Mdb_k$. Let  now $\Mdb$ be the diagonalization of the sequence $(\Mdb_k)_{k=1}^\infty$ and enumerate $\Mdb=\{r_1<..<r_i<..\}$. Denote $z_n=x_{r_{2n}}-x_{r_{2n+1}}$. By applying the $q$-co-Banach-Saks property to the semi normalized weakly null sequence $(z_n)_{n=1}^\infty$, we can find a subsequence $(z_{n_j})_{j=1}^\infty$ of $(z_n)_{n=1}^\infty$ and a constant $d>0$ such that for all $k\in \Ndb$ and all $k\le n_1<..<n_k$:
$$\|\sum_{j=1}^k z_{n_j}\|=\|\sum_{j=1}^k (x_{r_{2n_j}}-x_{r_{2n_j+1}})\|$$
$$=\|\varphi_k(r_{2n_1},..,r_{2n_{k}})-\varphi_k(r_{2n_1+1},..,r_{2n_k+1})\|\ge dk^{1/q}.$$
If $k$ is chosen large enough so that $dk^{1/q}\ge \theta$, we get that for all\\ 
$k\le n_1<..<n_k$:
$$\|(f\circ \varphi_k)(r_{2n_1},..,r_{2n_{k}})-(f\circ \varphi_k)(r_{2n_1+1},..,r_{2n_k+1})\|\ge Ad^\alpha k^{\alpha/q}.$$
It is now important to note that, due to the diagonal construction of $\Mdb$, we have that  for $k\le n_1<..<n_k$, the pair $\big((r_{2n_1},..,r_{2n_{k}}),(r_{2n_1+1},..,r_{2n_k+1})\big)$ is an element of $I_k(\Mdb_k)$. Therefore, this yields a contradiction with (\ref{interlaced}) if $k$ is chosen large enough.
\end{proof}

The following result is now a direct consequence of Theorem \ref{qcoBS} and Proposition \ref{coBSAUC}.

\begin{Cor}\label{AUC-AUS} Let $1<q<p<\infty$. Assume that $X$ is $q$-AUC and $Y$ is a $p$-AUS and quasi reflexive Banach space. Then
$$\alpha_Y(X)\le \frac{p}{q}.$$
\end{Cor}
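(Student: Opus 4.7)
The plan is to obtain this as an immediate consequence of the two preceding results. Since $X$ is $q$-AUC, Proposition \ref{coBSAUC} supplies the $q$-co-Banach-Saks property for $X$, delivering exactly the hypothesis on the source space needed by Theorem \ref{qcoBS}. The compression exponent bound asserted in the corollary will then follow from Theorem \ref{qcoBS} once the correct branch of its dichotomy is identified.

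To select that branch, I would verify that $X$ cannot contain an isomorphic copy of $\ell_1$. The key observation is that in $\ell_1$, testing the definition of $\overline{\delta}_{\ell_1}(t)$ with $x = e_1$ and tail unit vectors $y = e_k$ gives $\|x + ty\| = 1 + t$, so $\overline{\delta}_{\ell_1}$ is at best linear in $t$; in particular no lower bound of the form $c t^q$ with $q > 1$ can hold in $\ell_1$. Since the $q$-AUC modulus transfers (up to constants) to closed infinite-dimensional subspaces after a harmless renorming, an isomorphic $\ell_1$-copy sitting inside the $q$-AUC space $X$ would force $\ell_1$ itself to be $q$-AUC for some $q > 1$, a contradiction.

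With $\ell_1 \not\hookrightarrow X$ established, case (ii) of Theorem \ref{qcoBS} is the one in force, and the stated bound on $\alpha_Y(X)$ in terms of the exponents $p, q$ is delivered directly by that theorem, completing the proof.

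The only genuine work in this plan is the exclusion of isomorphic $\ell_1$-copies inside $q$-AUC spaces for $q > 1$; the remainder is a two-step bookkeeping chain through Proposition \ref{coBSAUC} and Theorem \ref{qcoBS}, both already at our disposal.
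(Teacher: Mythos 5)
Your overall chain (Proposition \ref{coBSAUC} to get the $q$-co-Banach-Saks property, then Theorem \ref{qcoBS}) is the paper's route, but the step you single out as ``the only genuine work'' --- excluding isomorphic copies of $\ell_1$ from a $q$-AUC space with $q>1$ --- is false, and your argument for it inverts the inequality. The condition $q$-AUC asks for a \emph{lower} bound $\overline{\delta}_X(t)\ge ct^q$ on $[0,1]$, and since $t^q\le t$ there for $q>1$, this condition gets \emph{weaker} as $q$ grows (in contrast to $p$-AUS, where $\overline{\rho}_X(t)\le ct^p$ gets stronger). Your computation correctly shows $\overline{\delta}_{\ell_1}(t)=t$, but a linear modulus satisfies $t\ge t^q$ for all $t\in[0,1]$ and all $q\ge 1$, so $\ell_1$ \emph{is} $q$-AUC for every $q\in[1,\infty)$. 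Hence no contradiction arises from an $\ell_1$-copy inside $X$, and case (i) of Theorem \ref{qcoBS} cannot be ruled out this way.

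The good news is that the corollary does not require ruling it out: in case (i) the theorem gives $\alpha_Y(X)\le 1/p$, and since $q\ge 1$ this is at most $q/p$, so \emph{both} branches of the dichotomy yield the stated bound. This is exactly why the paper presents the corollary as a direct consequence of Proposition \ref{coBSAUC} and Theorem \ref{qcoBS} with no case analysis at all. Deleting your second paragraph and replacing it with the observation that $1/p\le q/p$ repairs the proof; as written, the argument rests on a false lemma. (As a side remark, the exponent $p/q$ in the statement is greater than $1$ and hence vacuous for a compression exponent; the intended bound, as in Theorem \ref{Braga}, is $q/p$.)
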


For our next application, we need to recall the definition of the Szlenk index. This ordinal index  was first introduced by W. Szlenk \cite{Szlenk1968}, in a slightly different form, in order to prove that there is no separable reflexive Banach space universal for the class of all separable reflexive Banach spaces.\\
So, let $X$ be a Banach space, $K$ a weak$^*$-compact subset of its dual $X^*$ and $\eps>0$. Then we define
$$s_\eps'(K)=\{ x^* \in K,\ {\rm for\ any}\ {\rm weak^*-neighborhood}\ U\ {\rm of}\ x^*,\  {\rm diam}(K \cap U) \ge \varepsilon\}$$
and inductively the sets $s_\eps^\alpha(K)$ for $\alpha$ ordinal as follows: $s_\eps^{\alpha+1}(K)=s_\eps'(s_\eps^\alpha(K))$ and $s_\eps^\alpha(K)=\bigcap_{\beta<\alpha}s_\eps^\beta(K)$ if $\alpha$ is a limit ordinal.\\
Then $Sz(K,\eps)=\inf\{\alpha,\ s^\alpha_\eps(K)=\emptyset\}$ if it exists and we denote $Sz(K,\eps)=\infty$ otherwise. Next we define $Sz(K)=\sup_{\eps>0}Sz(K,\eps)$.
The Szlenk index of $X$ is $Sz(X)=Sz(B_{X^*})$. We also denote $Sz(X,\eps)=Sz(B_{X^*},\eps)$. \\
We shall apply the following renorming theorem, which is proved in \cite{GKL2001}.

\begin{Thm}\label{GKL} Let $q\in (1,\infty)$ and $X$ be a Banach space. Assume that there exists $C>0$ such that
$$\forall \eps >0\ \ \ Sz(X,\eps)\le C\eps^{-q}.$$
Then, for all $r\in (q,\infty)$, $X$ admits an equivalent norm whose dual norm is $r$-weak$^*$-AUC.
\end{Thm}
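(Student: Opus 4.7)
The plan is to build the new norm directly on the dual $X^{*}$ using Szlenk derivations at dyadic scales, and then invoke the $w^{*}$-side of Proposition \ref{duality} to transfer the construction to $X$.

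For each integer $n\ge 1$, set $\eps_n=2^{-n}$ and let $N_n$ be the least integer with $s_{\eps_n}^{N_n}(B_{X^{*}})=\emptyset$, so by hypothesis $N_n\le C\,2^{nq}$. Consider the finite nested family of weak$^{*}$-compact sets
$$B_{X^{*}}=s_{\eps_n}^{0}(B_{X^{*}})\supset s_{\eps_n}^{1}(B_{X^{*}})\supset\cdots\supset s_{\eps_n}^{N_n}(B_{X^{*}})=\emptyset,$$
and replace each term by its symmetric weak$^{*}$-closed convex hull $K_n^{k}=\overline{\mathrm{conv}}^{w^{*}}\bigl(s_{\eps_n}^{k}(B_{X^{*}})\cup(-s_{\eps_n}^{k}(B_{X^{*}}))\bigr)$. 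The associated Minkowski functionals $\mu_n^{k}$ are symmetric, weak$^{*}$-lower semicontinuous, and equivalent to $\|\cdot\|_{X^{*}}$ (with constants worsening as $k$ grows).

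The fundamental one-step mechanism is this: if $x^{*}\in K_n^{k}\setminus K_n^{k+1}$ there is a weak$^{*}$-open neighborhood $U$ of $x^{*}$ with $\mathrm{diam}\bigl(U\cap s_{\eps_n}^{k}(B_{X^{*}})\bigr)<2\eps_n$, so any weak$^{*}$-null sequence $(y_j^{*})$ with $\|y_j^{*}\|\ge 2\eps_n$ eventually forces $\mu_n^{k}(x^{*}+y_j^{*})>1$. Iterating across levels, a perturbation of norm $\approx t$ at a scale $\eps_n\ll t$ bumps $\mu_n^{k}$ upward at roughly $\lfloor t/\eps_n\rfloor$ consecutive values of $k$. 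Fix now $r\in(q,\infty)$ and assemble a single symmetric, weak$^{*}$-lower semicontinuous norm on $X^{*}$ of the form
$$|||x^{*}|||^{r}=\|x^{*}\|^{r}+\sum_{n\ge 1}\lambda_n\,\Phi_n(x^{*})^{r},$$
where $\Phi_n$ is extracted from the full filtration $(\mu_n^{k})_{0\le k\le N_n}$ (for instance an $\ell^{r}$-sum of the gaps $\mu_n^{k+1}-\mu_n^{k}$) and the weights $\lambda_n$ are chosen so that $\lambda_n\cdot N_n$ is summable to a constant (using $N_n\le C\,2^{nq}$), while $\lambda_n\cdot(\text{drops at scale }\eps_n)$ still contribute a genuine share. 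Applied to $x^{*}\in S_{(X^{*},|||\cdot|||)}$ and a weak$^{*}$-null $(y_j^{*})$ with $\|y_j^{*}\|\ge t$, telescoping the one-step drop principle across scales $\eps_n\lesssim t$ yields
$$\liminf_{j}|||x^{*}+y_j^{*}|||^{r}\ge 1+c\,t^{r},$$
so the new dual norm is $r$-weak$^{*}$-AUC; weak$^{*}$-lower semicontinuity and symmetry ensure that it is the dual of an equivalent norm on $X$.

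The main obstacle is the quantitative calibration in the previous paragraph: the weights $\lambda_n$ must be simultaneously small enough that the series defining $|||\cdot|||^{r}$ is equivalent to $\|\cdot\|_{X^{*}}^{r}$ and large enough that the contributions of the drops at scales $\eps_n\lesssim t$ sum to a genuine $t^{r}$-modulus. Setting up this balance uses the full strength of the hypothesis $N_n\lesssim 2^{nq}$, and is precisely where the strict inequality $r>q$ is consumed, in a manner parallel to Pisier's renorming of super-reflexive spaces, where the power type/cotype gained by renorming is also strictly worse than the one provided by the modulus bound.
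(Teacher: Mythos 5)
First, note that the paper does not prove this statement at all: it is quoted as a known renorming theorem and attributed to \cite{GKL2001}, so there is no internal proof to compare against. Your outline does follow the general architecture of the argument in that reference (dyadic scales $\eps_n=2^{-n}$, the filtration of Szlenk derived sets with $N_n\lesssim 2^{nq}$ levels, Minkowski functionals of convexified level sets, a weighted combination over scales, and the loss from exponent $q$ to $r>q$ coming from the summability of the weights). But as written the proposal has genuine gaps at exactly the points where the real proof has its content.

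The central one is the ``one-step mechanism.'' The Szlenk derivation does not commute with taking weak$^*$-closed convex hulls: from $x^*\in K_n^{k}\setminus K_n^{k+1}$, i.e.\ membership in the convex hull of $s_{\eps_n}^{k}(B_{X^*})$ but not in the convex hull of $s_{\eps_n}^{k+1}(B_{X^*})$, you cannot conclude that some weak$^*$-neighborhood of $x^*$ meets $s_{\eps_n}^{k}(B_{X^*})$ in a set of small diameter, nor that a weak$^*$-null perturbation of norm $\ge 2\eps_n$ eventually forces $\mu_n^{k}(x^*+y_j^*)>1$: the perturbed points may remain in the convex hull as nontrivial convex combinations of far-apart points of $s_{\eps_n}^{k}(B_{X^*})$ even though they leave $s_{\eps_n}^{k}(B_{X^*})$ itself. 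Handling this requires the comparison between the Szlenk derivation and its convexified version (a lemma going back to the first-named author's earlier work and used in \cite{GKL2001}), and it is not free. There are further problems: the functional $\Phi_n$ built as an $\ell^r$-sum of the gaps $\mu_n^{k+1}-\mu_n^{k}$ is a difference-based expression with no reason to be convex, so $|||\cdot|||$ as you define it need not be a norm (the standard construction sums the Minkowski functionals themselves, after unioning each level set with a small ball so that the functionals remain finite and equivalent --- note that $s_{\eps_n}^{k}(B_{X^*})=\emptyset$ for $k\ge N_n$, so your $\mu_n^{k}$ is identically $+\infty$ there, contradicting the claimed equivalence ``with constants worsening as $k$ grows''). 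Finally, the quantitative calibration of the weights $\lambda_n$, which you yourself identify as ``the main obstacle,'' is asserted rather than carried out, and it is precisely where the hypothesis $r>q$ is used. As it stands the proposal is a plausible road map, not a proof; for this paper the honest move is the one the authors make, namely to cite \cite{GKL2001}.
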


Since a dual norm which is $r$-weak$^*$-AUC is also $r$-AUC, we obtain the following statement as an immediate consequence of our Theorem \ref{GKL} and Corollary \ref{AUC-AUS}.

\begin{Cor}\label{Sz} Let $1<q<p<\infty$. Assume that $Y$ is a $p$-AUS and quasi reflexive Banach space. Assume also that there exists $C>0$ such that
$$\forall \eps >0\ \ \ Sz(X,\eps)\le C\eps^{-q}.$$
Then
$$\alpha_{Y}(X^*)\le \frac{p}{q}.$$
\end{Cor}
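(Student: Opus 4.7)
The plan is to combine Theorem \ref{GKL} with Corollary \ref{AUC-AUS}, using the renorming theorem as a bridge between the Szlenk assumption (which lives on $X$) and the asymptotic uniform convexity hypothesis required by the embedding estimate (which must live on $X^*$).

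Since $q<p$, I first fix an arbitrary $r\in(q,p)$. Applying Theorem \ref{GKL} to $X$, we may replace the norm on $X$ by an equivalent one whose dual norm on $X^*$ is $r$-weak$^*$-AUC. As noted in the sentence preceding the statement of the corollary, an $r$-weak$^*$-AUC dual norm is automatically $r$-AUC as a norm on $X^*$: indeed, the family of weak$^*$-closed finite-codimensional subspaces of $X^*$ is contained in the family of all finite-codimensional subspaces, so enlarging the family in the definition of $\overline{\delta}_{X^*}$ only enlarges the supremum, and therefore $\overline{\delta}_{X^*}(t)\ge \overline{\delta}_X^*(t)\ge c t^{r}$.

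Next, I apply Corollary \ref{AUC-AUS} to the pair $(X^*,Y)$: since $X^*$ (in its equivalent dual norm) is $r$-AUC, $Y$ is $p$-AUS and quasi-reflexive, and $1<r<p$, we obtain $\alpha_Y(X^*)\le p/r$. The compression exponent is a coarse Lipschitz invariant, and equivalent norms on $X^*$ yield Lipschitz equivalent metrics via the identity map; hence the inequality transfers back to the original norm on $X^*$. Letting $r$ decrease to $q$ then produces $\alpha_Y(X^*)\le p/q$, as claimed. There is no serious obstacle in the argument: both ingredients are used as black boxes, and the only point that requires mild care is the bookkeeping observation that the renorming of $X$ supplied by Theorem \ref{GKL} changes the norm of $X^*$ only up to equivalence, so the bound on $\alpha_Y(X^*)$ obtained with respect to the new norm is valid for the original one.
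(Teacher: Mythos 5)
Your proposal is correct and follows essentially the same route as the paper, which obtains the corollary in one line by combining Theorem \ref{GKL} with Corollary \ref{AUC-AUS} via the observation that an $r$-weak$^*$-AUC dual norm is $r$-AUC. Your added remarks (that the compression exponent is unchanged under equivalent renorming, and the passage $r\to q$) are exactly the implicit bookkeeping the paper leaves to the reader.
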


\noindent {\bf Aknowledgements.}\\
The authors wish to thank F. Baudier and Th. Schlumprecht for pointing out the application to the alternating Banach-Saks property. The first named author also wants  to thank for their hospitality the Universidad de Murcia and the University of Texas A\&M, where part of this work was completed.

\end{document}